\documentclass[11pt]{article}
\usepackage{amsmath,amssymb,amsthm}
\usepackage{graphicx}
\usepackage{hyperref}
\usepackage{geometry}
\theoremstyle{plain}
\newtheorem{theorem}{Theorem}

\newtheorem{corollary}{Corollary}

\theoremstyle{definition}

\newtheorem{remark}{Remark}

\theoremstyle{remark}

\title{Explicit Robin Green's Functions and Resonance Spectra for the Helmholtz Equation on Balls in All Dimensions}
\author{Ming Yang\\
School of Mathematics, Southeast University \\
Nanjing, 210096 P.R. China \\
\texttt{yangming@seu.edu.cn}\\
}
\date{}

\begin{document}
\maketitle

\begin{abstract}
This work constructs explicit closed-form Green's functions for the Helmholtz equation with Robin boundary conditions on balls in all dimensions $d\ge 2$. Despite the canonical geometry, such kernels have remained unavailable because the Robin condition couples the field and its normal derivative, preventing the method of images and obstructing standard eigenfunction expansions. By a decomposition--expansion method, the free-space fundamental solution is expanded via Graf's addition theorem in two dimensions and the hyperspherical addition theorem in higher dimensions; the regular correction is then determined algebraically by matching Robin boundary data mode by mode. The resulting series converge absolutely on compact interior subsets and are computable to machine precision. These explicit kernels yield a complete characterisation of the resonance spectra: each spectral branch increases strictly with the impedance parameter, interpolating between Neumann and Dirichlet eigenvalues. We establish a low-frequency spectral gap with an explicit cutoff estimate, and a universal high-frequency mode spacing with a second-order correction that distinguishes the Robin condition from the Dirichlet and Neumann extremes. The closed-form expressions furnish exact benchmark solutions for wave simulations in impedance-matched cavities, eliminating geometric discretisation error and providing reference data for the validation of finite-element and boundary-element algorithms in acoustics, electromagnetism and quantum mechanics.
\end{abstract}

\noindent\textbf{AMS Subject Classifications:} 35J05, 35J08, 35J25, 35C05, 35P10.

\noindent\textbf{Keywords:} Helmholtz equation, Robin boundary condition, Green's function, spectral interpolation, hyperspherical harmonics.

\section{Introduction}
\label{sec:intro}

The Green's function for the Helmholtz equation on a ball $\Omega\subset\mathbb{R}^d$ with Robin boundary conditions satisfies
\begin{equation}\label{robin}
\left\{
\begin{array}{ll}
\smallskip\Delta G(x,x_0)+k^2G(x,x_0)=-\delta(x-x_0), & x\in\Omega,\ \ x_0\in\Omega,\\[4pt]
\displaystyle\frac{\partial G}{\partial n} + \alpha G = 0,\quad & x\in\partial\Omega,
\end{array}\right.
\end{equation}
where $k$ is the wave number, $\delta(\cdot)$ is the Dirac delta function, $x_0$ is the fixed source point, $n$ is the unit outer normal to the boundary $\partial\Omega$, and $\alpha>0$ is a constant impedance parameter. This mixed condition arises in acoustics (locally reacting surfaces), electromagnetism (the Leontovich impedance condition for imperfect conductors \cite{SeniorVolakis1995}), and quantum mechanics ($\delta$-shell confinement along cavity boundaries \cite{Albeverio2004}); the limits $\alpha\to0^+$ and $\alpha\to+\infty$ recover the Neumann and Dirichlet extremes, respectively.

\medskip\noindent\textbf{Exact benchmarks and the obstruction to obtaining them.}
Closed-form kernels for canonical geometries are indispensable in computational wave physics: they eliminate geometric discretisation error and provide the only reliable reference solutions against which finite-element or boundary-element algorithms for impedance boundaries can be validated \cite{ColtonKress2019}. The explicit resolvent also encodes the complete resonance spectrum, furnishing non-approximate test data for spectral calibration and Weyl asymptotics in acoustics and electromagnetism \cite{SeniorVolakis1995}. Despite this need, a closed-form Robin Green's function for the Helmholtz equation on the ball has remained unavailable in any dimension $d\ge 2$. The reason is structural: the Robin condition \emph{simultaneously couples} the field and its normal derivative, so the boundary trace of the free-space fundamental solution does \emph{not} decompose into separated scalar data. This coupling prevents the method of images (no elementary system of image sources can satisfy the impedance relation) and obstructs the standard eigenfunction expansion (the Robin trace mixes the two linearly independent radial solutions in a way that does not decouple mode by mode). Existing approaches therefore rely on discretisation, asymptotic expansions, or boundary integral equations; none yields a computable closed-form expression suitable for exact forward models.\medskip

Since Green's 1828 essay \cite{Green1828}, explicit kernels have been central to potential theory and wave propagation. Dirichlet Green's functions for canonical geometries were fully characterised by the early 1900s and underpin the method of images, eigenfunction expansions, and classical Mie scattering theory. The Robin generalisation, however, has resisted explicit solution because of the coupling obstruction described above.

Abstract existence and regularity theory for Robin elliptic systems on general domains has been developed via variational and semigroup methods. Choi and Kim \cite{ChoiKim2014} established well-posedness and Green's function estimates for elliptic and parabolic systems with Robin-type boundary conditions. These results confirm that Robin problems possess solutions, yet they do not produce computable closed-form kernels. For the static Laplace case $k=0$, researchers have derived integral representations on disks and balls using complex-analytic methods and modified Poisson kernels \cite{BegehrVaitekhovich2013, Sadybekov2015, Karachik2019}. However, these constructions rely on the algebraic structure of harmonic functions, including Almansi expansions and the preservation of harmonicity under the Euler operator, and they do not readily extend to the Helmholtz regime. The wavenumber $k$ breaks the core harmonic identities underlying Poisson kernel theory, so the Helmholtz Robin problem has had no explicit solution.

Over the past two decades, the spectral theory of Robin Laplace operators has attracted substantial attention. Daners \cite{Daners2000} gave a general functional-analytic framework for Robin problems on Lipschitz domains. Filonov \cite{Filonov2004} showed that Dirichlet and Neumann eigenvalue sets are strictly separated. Eigenvalue bounds via Bessel quotients were obtained by Freitas \cite{Freitas2021}; see Kennedy \cite{Kennedy2017} for a survey of spectral geometry, monotonicity, and optimisation. Very recently, Ognibene \cite{Ognibene2025} obtained the sharp first-order asymptotics of all Robin eigenvalues in the Dirichlet limit on general Lipschitz domains, identifying a novel geometric quantity as the leading correction term. These results are qualitative or asymptotic. The explicit resolvent kernel for the Robin Helmholtz problem on a ball has not been available; we construct it below. As shown in Section~\ref{sec:spectra}, the poles of this kernel are determined by a single characteristic equation, from which all spectral properties follow.

The one-dimensional case admits an elementary closed form, so we focus on $d\ge 2$. For the disk ($d=2$) and the $d$-dimensional ball ($d\ge 3$), we construct the Robin Green's function by the \emph{decomposition--expansion method}: the Green's function is decomposed into the free-space fundamental solution plus a regular correction; the fundamental solution is expanded on the boundary via Graf's addition theorem (2D) or the hyperspherical addition theorem ($d\ge 3$), the Robin boundary data are matched mode by mode, and the regular correction is determined algebraically. The resulting series are absolutely convergent on compact interior subsets, symmetric in source and observer, and computable to machine precision.

The same kernels also determine the resonance spectrum of the Robin cavity. The resonant wavenumbers are the positive roots $k_{\ell,p}(\alpha)$ of the characteristic equation $\Delta_\ell(k;\alpha)=0$, and each branch varies monotonically with the impedance $\alpha$. Our spectral analysis establishes three quantitative results:
\begin{itemize}
\item[(i)] \textbf{Monotonic spectral interpolation.} For each mode $(\ell,p)$, the resonant wavenumber $k_{\ell,p}(\alpha)$ increases strictly with $\alpha$, interpolating between the Neumann eigenvalue $\mu'_{\ell,p}/R$ (as $\alpha\to0^{+}$) and the Dirichlet eigenvalue $\mu_{\ell,p}/R$ (as $\alpha\to+\infty$).
\item[(ii)] \textbf{Low-frequency spectral gap.} The Robin spectrum exhibits a low-frequency gap: there exists a cutoff $k_c>0$ such that no resonant modes occur for $0<k<k_c$. For $\alpha R\lesssim 1$, we derive the explicit asymptotic estimate
\[
k_c \approx \sqrt{\frac{d\alpha}{R}\left(1-\frac{\alpha R}{d+2}\right)},
\]
and an explicit global approximation for arbitrary $\alpha R$ is given in Section~\ref{sec:kc-estimate}.
\item[(iii)] \textbf{Universal high-frequency mode spacing.} At high frequencies ($p\to\infty$), consecutive resonances satisfy
\[
k_{\ell,p+1}-k_{\ell,p} = \frac{\pi}{R}\left[1-\frac{C_{\ell,d}(\alpha)}{(k_{\ell,p}R)^{2}}+O\bigl((k_{\ell,p}R)^{-3}\bigr)\right],
\]
where
\[
C_{\ell,d}(\alpha)=\alpha R+\ell-\frac{(2\ell+d)^{2}-1}{8}.
\]
The leading-order spacing $\pi/R$ agrees with the Weyl law and is universal, independent of $\alpha$, $\ell$, and $d$, while the second-order correction governed by $C_{\ell,d}(\alpha)$ distinguishes the Robin condition from the Dirichlet and Neumann extremes and encodes both the dimension $d$ and the angular mode $\ell$.
\end{itemize}

The remainder of the paper is structured as follows. Section~\ref{sec:disk} gives the explicit construction on the two-dimensional disk. Section~\ref{sec:ball} carries out the construction for the $d$-dimensional ball with $d\geq 3$. Section~\ref{sec:spectra} carries out the spectral analysis: monotonic eigenvalue behaviour, low-frequency spectral gap, and high-frequency asymptotic spacing. Section~\ref{sec:numerical} presents numerical validation and spectral illustration. Section~\ref{sec:conclusion} summarises the results and indicates directions for future research.

\section{Explicit Construction on the Disk}\label{sec:disk}
Consider the Robin Green's function for the Helmholtz equation on the disk $B_{R}(0)$ of radius $R$:
\begin{equation}\label{eq:robin-disk}
\begin{cases}
\Delta G(x,x_{0}) + k^{2}G(x,x_{0}) = -\delta(x-x_{0}), & x \in B_{R}(0), \quad x_{0} \in B_{R}(0), \\[6pt]
\displaystyle\frac{\partial G(x,x_{0})}{\partial n} + \alpha G(x,x_{0}) = 0, & x \in \partial B_{R}(0).
\end{cases}
\end{equation}

\subsection{Decomposition and boundary data}

We use the canonical decomposition
\begin{equation}\label{eq:decomp-disk}
G(x,x_{0}) = \phi_{2}(x-x_{0}) + v(x),
\end{equation}
where the free-space fundamental solution in two dimensions is
\begin{equation}\label{eq:phi2}
\phi_{2}(x-x_{0}) = \frac{i}{4}H_{0}^{(1)}(k|x-x_{0}|),
\end{equation}
with $H_{n}^{(1)}$ the $n$-th order Hankel function of the first kind. This fundamental solution satisfies
\begin{equation}
\Delta\phi_{2}(x-x_{0}) + k^{2}\phi_{2}(x-x_{0}) = -\delta(x-x_{0}), \quad x \in \mathbb{R}^{2},
\end{equation}
and obeys the two-dimensional Sommerfeld radiation condition. The auxiliary function $v(x)$ satisfies the homogeneous Helmholtz equation in the disk and the nonhomogeneous Robin condition on the boundary:
\begin{equation}\label{eq:v-disk}
\begin{cases}
\Delta v + k^{2}v = 0, & x \in B_{R}(0), \\[6pt]
\mathcal{B}v(x) = -\mathcal{B}\phi_{2}\bigl(x-x_{0}\bigr), & |x| = R,
\end{cases}
\end{equation}
where $\mathcal{B} := \partial_{r} + \alpha$ is the Robin boundary operator at $|x| = R$.

Introduce polar coordinates $x = (r,\theta)$ and $x_{0} = (r_{0},\theta_{0})$. Without loss of generality, set $\theta_{0} = 0$ by rotational symmetry; the general case is restored at the end. Define the boundary data
\begin{equation}\label{eq:gtheta}
g(\theta) := -\mathcal{B}\phi_{2}(x-x_{0})
= -\Bigl(\partial_{r}\phi_{2}(x-x_{0}) + \alpha\phi_{2}(x-x_{0})\Bigr)\bigg|_{r=R}.
\end{equation}

\subsection{Modal expansion via Graf's addition theorem}

The fundamental solution $\phi_{2}$ admits an exact Fourier--Bessel expansion via Graf's addition theorem \cite[\S10.23]{Olver2010}:
\begin{equation}\label{eq:graf}
\phi_{2}(x-x_{0}) = \frac{i}{4}\sum_{n=-\infty}^{\infty} J_{n}(kr_{0}) H_{n}^{(1)}(kr) e^{in\theta}, \quad r > r_{0},
\end{equation}
where $J_{n}$ is the $n$-th order Bessel function of the first kind. This expansion separates the singular behaviour at $r = r_{0}$ (encoded in the Hankel functions) from the regular behaviour at the origin (encoded in the Bessel functions), and is the crucial ingredient that enables explicit coefficient matching.

Substituting \eqref{eq:graf} into the boundary operator yields the modal decomposition of the boundary data:
\begin{equation}\label{eq:g-modal}
g(\theta) = -\frac{i}{4}\sum_{n=-\infty}^{\infty} J_{n}(kr_{0}) \, \mathcal{B}[H_{n}^{(1)}(kr)] \, e^{in\theta},
\end{equation}
where $\mathcal{B}[H_{n}^{(1)}(kr)]$ denotes the Robin boundary trace of the $n$-th Hankel mode.

\subsection{Separation of variables and coefficient matching}

Since $v$ satisfies the homogeneous Helmholtz equation, separation of variables yields the general form
\begin{equation}\label{eq:v-series}
v(r,\theta) = \sum_{n=-\infty}^{\infty} A_{n} J_{n}(kr) e^{in\theta},
\end{equation}
where only Bessel functions $J_{n}$ (regular at the origin) appear; the singular Neumann functions $Y_{n}$ are excluded by the regularity requirement at $r = 0$. Substituting \eqref{eq:v-series} into the boundary condition gives
\begin{equation}\label{eq:v-bc}
g(\theta) = \sum_{n=-\infty}^{\infty} A_{n} \, \mathcal{B}[J_{n}(kr)] \, e^{in\theta},
\end{equation}
where $\mathcal{B}[J_{n}(kr)]$ is the Robin trace of the $n$-th Bessel mode.

Direct computation using the Bessel recurrence relations $J_{n}'(z) = \frac{n}{z}J_{n}(z) - J_{n+1}(z)$ yields the explicit Robin traces:
\begin{align}
\mathcal{B}[J_{n}(kr)] &= \left(\alpha + \frac{n}{R}\right)J_{n}(kR) - kJ_{n+1}(kR), \label{eq:trace-J} \\[6pt]
\mathcal{B}[H_{n}^{(1)}(kr)] &= \left(\alpha + \frac{n}{R}\right)H_{n}^{(1)}(kR) - kH_{n+1}^{(1)}(kR). \label{eq:trace-H}
\end{align}
Matching the Fourier coefficients in \eqref{eq:g-modal} and \eqref{eq:v-bc} gives
\begin{equation}\label{eq:An}
A_{n} = -\frac{i}{4} J_{n}(kr_{0}) \cdot \frac{\mathcal{B}[H_{n}^{(1)}(kr)]}{\mathcal{B}[J_{n}(kr)]}.
\end{equation}

\subsection{Main result}

Restoring the general angle $\theta - \theta_{0}$ by rotational covariance, we obtain the explicit representation.

\begin{theorem}[Robin Green's function on the disk]\label{thm:disk}
The Robin Green's function for the Helmholtz equation on the disk $B_{R}(0)$ is given by
\begin{equation}\label{eq:G-disk}
G(x,x_{0}) =\frac{i}{4}H_{0}^{(1)}(k|x-x_{0}|)
-\frac{i}{4}\sum_{n=-\infty}^{\infty} \frac{\mathcal{B}[H_{n}^{(1)}(kr)]}{\mathcal{B}[J_{n}(kr)]} J_{n}(kr_{0})J_{n}(kr)e^{in(\theta-\theta_{0})},
\end{equation}
where $\mathcal{B}[J_{n}(kr)]$ and $\mathcal{B}[H_{n}^{(1)}(kr)]$ are given by \eqref{eq:trace-J}--\eqref{eq:trace-H}.
\end{theorem}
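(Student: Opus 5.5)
The proof is a verification that the right-hand side of \eqref{eq:G-disk} solves \eqref{eq:robin-disk}, together with a uniqueness argument. The plan has four steps.

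\textbf{Step 1: the correction term is a regular Helmholtz solution.} I will treat the series part as $v(x)=\sum_n A_n J_n(kr)e^{in(\theta-\theta_0)}$ with $A_n$ from \eqref{eq:An}, and assume $k$ is not a resonance, i.e.\ $\mathcal{B}[J_n(kr)]\neq 0$ for all $n$. Then each term is a smooth solution of $\Delta u+k^2u=0$ on the whole plane. The point is to show the series converges absolutely and locally uniformly, with all derivatives, on $|x|\le R$, for fixed $r_0<R$. For this I use the large-order asymptotics
\[
J_n(z)\sim \frac{(z/2)^{|n|}}{|n|!},\qquad H^{(1)}_n(z)\sim -\frac{i}{\pi}(|n|-1)!\,(2/z)^{|n|},
\]
together with $J_{-n}=(-1)^nJ_n$, $H^{(1)}_{-n}=(-1)^nH^{(1)}_n$. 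For the traces, \eqref{eq:trace-J}--\eqref{eq:trace-H} give $\mathcal{B}[J_n]\sim (n/R)J_n(kR)$ and $\mathcal{B}[H_n^{(1)}]\sim -(n/R)H_n^{(1)}(kR)$ as $n\to+\infty$, with the $kJ_{n+1}$ term of lower order. Hence
\[
\frac{\mathcal{B}[H_n^{(1)}]}{\mathcal{B}[J_n]}\approx -\frac{H_n^{(1)}(kR)}{J_n(kR)}\sim \frac{i}{\pi}\,n!\,(n-1)!\,(2/kR)^{2n}.
\]
It follows that the $n$-th term behaves like $n^{-1}(rr_0/R^2)^{|n|}$. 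This decays geometrically for $r\le R$ whenever $r_0<R$, and the same holds after differentiation in $r$ and $\theta$, which only adds polynomial factors in $n$. So $v$ is smooth up to the boundary and solves the homogeneous equation, and $\Delta G+k^2G=-\delta$ follows from the defining property of $\phi_2$.

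\textbf{Step 2: the boundary condition.} On $|x|=R>r_0$, Graf's expansion \eqref{eq:graf} and its $r$-derivative converge absolutely, again with geometric ratio $r_0/R$, so $\mathcal{B}$ may be applied term by term and gives \eqref{eq:g-modal}. Applying $\mathcal{B}$ termwise to $v$, justified by Step 1, gives \eqref{eq:v-bc} with the $A_n$ of \eqref{eq:An}. The two Fourier series then cancel coefficient by coefficient, so $\mathcal{B}G=0$. The reduction to $\theta_0=0$ is undone by rotation invariance of the Laplacian and of $\mathcal{B}$, which replaces $\theta$ by $\theta-\theta_0$.

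\textbf{Step 3: uniqueness.} Suppose $w$ is the difference of two solutions. Then $w$ is a regular solution of the homogeneous Robin problem. Expanding $w$ in Fourier modes, each mode is $c_nJ_n(kr)$ by regularity at the origin, and the boundary condition forces $c_n\,\mathcal{B}[J_n]=0$. Away from resonances this gives $c_n=0$, so $w=0$ and the formula is \emph{the} Green's function.

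\textbf{Main obstacle.} The hardest step is the uniform nonvanishing of $\mathcal{B}[J_n(kr)]$ together with the ratio asymptotics in Step 1, uniformly in $n$. For real $k$ and $\alpha>0$, I would show $\mathcal{B}[J_n]\neq 0$ for large $|n|$ from the asymptotics, since $(\alpha+n/R)J_n(kR)$ dominates $kJ_{n+1}(kR)$. The finitely many remaining $n$ are handled by the standing non-resonance hypothesis. Negative $n$ must be treated carefully, because the trace formula with $n<0$ gives $\alpha+n/R$. The clean route is to rewrite it via $J_n'$, noting that $\mathcal{B}[J_{-n}]=(-1)^n\mathcal{B}[J_n]$, so the ratio in \eqref{eq:G-disk} is even in $n$ and only $n\ge 0$ needs estimating.
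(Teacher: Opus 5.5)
Your proof is correct and uses the same construction as the paper: the splitting $G=\phi_2+v$, Graf's expansion on $|x|=R$, and mode-by-mode matching of Robin traces. The difference is in how the resulting series is justified.

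The paper handles this in one line. It says $v$ satisfies the Robin condition ``by construction'', and that elliptic regularity makes $v$ real-analytic, so its Fourier--Bessel series converges absolutely and uniformly on compact subsets of the open disk. That is short and carries over verbatim to $d\ge 3$. But it tacitly presupposes that a solution of \eqref{eq:v-disk} exists and that its Fourier coefficients are the $A_n$ of \eqref{eq:An}. Also, convergence on interior compacta does not by itself justify applying $\mathcal{B}$ termwise on $|x|=R$.

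Your large-order estimate is self-contained and quantitative: the $n$-th term of $v$ has size $|n|^{-1}(rr_0/R^2)^{|n|}$. This gives convergence, with all derivatives, on the larger disk $|x|<R^2/r_0$, so the boundary condition holds classically. That radius is the natural one: the leading parts of the terms sum to $-\frac{1}{2\pi}\log|x-x_0^{*}|$ plus a constant, which is the Neumann image of the source at $x_0^{*}=R^2x_0/r_0^2$.

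You also state explicitly the non-resonance hypothesis $\mathcal{B}[J_n(kr)]\neq 0$ for all $n$. The theorem leaves it implicit, although the formula genuinely fails at $k=k_{\ell,p}(\alpha)$. In addition, you supply a uniqueness argument, which the paper does not address. The price is that your estimates rely on integer-order Bessel asymptotics and would need to be adapted, with Gamma-function factors, for $d\ge 3$.
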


\begin{proof}
The first term is the free-space fundamental solution $\phi_{2}$ satisfying the singular equation. The second term is the auxiliary function $v$  from \eqref{eq:v-series} and \eqref{eq:An}; it satisfies the homogeneous Helmholtz equation and the Robin boundary condition by construction. Elliptic regularity implies $v$ is real-analytic, hence its Fourier--Bessel series converges absolutely and uniformly on compact subsets of $B_{R}(0)$.
\end{proof}

\section{Explicit Construction on the Ball in \texorpdfstring{$d$}{d} Dimensions (\texorpdfstring{$d\geq 3$}{d >= 3})}\label{sec:ball}

We construct the Robin Green's function for the Helmholtz equation on the $d$-dimensional ball $B_{R}(0)\subset\mathbb{R}^{d}$ of radius $R$, with $d\geq 3$:
\begin{equation}\label{eq:robin-ball}
\begin{cases}
\Delta G(x,x_{0}) + k^{2}G(x,x_{0}) = -\delta(x-x_{0}), & x \in B_{R}(0), \quad x_{0} \in B_{R}(0), \\[6pt]
\displaystyle\frac{\partial G(x,x_{0})}{\partial n} + \alpha G(x,x_{0}) = 0, & x \in \partial B_{R}(0).
\end{cases}
\end{equation}
Set
\begin{equation}\label{eq:nu}
\nu := \frac{d}{2}-1 \geq \frac{1}{2}.
\end{equation}
The free-space fundamental solution in $\mathbb{R}^{d}$ is
\begin{equation}\label{eq:phin}
\phi_{d}(x-x_{0}) = \frac{i}{4}\left(\frac{k}{2\pi|x-x_{0}|}\right)^{\nu} H_{\nu}^{(1)}(k|x-x_{0}|),
\end{equation}
where $H_{\nu}^{(1)}$ is the Hankel function of the first kind of order $\nu$. This satisfies $\Delta\phi_{d}+k^{2}\phi_{d}=-\delta$ in $\mathbb{R}^{d}$ and obeys the Sommerfeld radiation condition.
Decompose the Robin Green's function as
\begin{equation}\label{eq:decomp-ball}
G(x,x_{0}) = \phi_{d}(x-x_{0}) + v(x),
\end{equation}
where the auxiliary function $v$ satisfies the homogeneous Helmholtz equation in the ball and the nonhomogeneous Robin condition on the boundary:
\begin{equation}\label{eq:v-ball}
\begin{cases}
\Delta v + k^{2}v = 0, & x \in B_{R}(0), \\[6pt]
\mathcal{B}v(x) = -\mathcal{B}\phi_{d}(x-x_{0}), & |x| = R,
\end{cases}
\end{equation}
with $\mathcal{B} := \partial_{r} + \alpha$ the Robin boundary operator at $r=|x|=R$.

\subsection{Modal expansion via the hyperspherical addition theorem}

The fundamental solution $\phi_{d}$ admits an exact expansion via the hyperspherical addition theorem for Bessel functions \cite[\S10.23]{Olver2010}. Let $\omega=x/|x|$ and $\omega_{0}=x_{0}/|x_{0}|$ denote points on the unit sphere $\mathbb{S}^{d-1}$, and let $\gamma$ be the angle between $x$ and $x_{0}$, so that $\cos\gamma=\omega\cdot\omega_{0}$. Then for $r>r_{0}$,
\begin{equation}\label{eq:hyperspherical-addition}
\phi_{d}(x-x_{0}) = \frac{i k^{d-2}\Gamma(\nu)}{4\pi^{\nu}} \sum_{\ell=0}^{\infty} (\ell+\nu)\,\frac{J_{\ell+\nu}(kr_{0})}{(kr_{0})^{\nu}}\,
\frac{H_{\ell+\nu}^{(1)}(kr)}{(kr)^{\nu}}\,C_{\ell}^{\nu}
(\cos\gamma),
\end{equation}
where $J_{\mu}$ is the Bessel function of the first kind, $H_{\mu}^{(1)}$ the Hankel function of the first kind, and $C_{\ell}^{\nu}$ the Gegenbauer (ultraspherical) polynomial of degree $\ell$ with index $\nu$.\smallskip

Equivalently, using the orthonormal hyperspherical harmonics $Y_{\ell,m}$ on $\mathbb{S}^{d-1}$, $m=1,\dots,N(d,\ell)$, where $N(d,\ell)=\frac{(2\ell+d-2)\Gamma(\ell+d-2)}{\Gamma(\ell+1)\Gamma(d-1)}$ is the dimension of the space of degree-$\ell$ spherical harmonics in $\mathbb{R}^{d}$, the addition formula reads
\begin{equation}\label{eq:spherical-addition-n}
\phi_{d}(x-x_{0}) = \frac{i\pi k^{d-2}}{2} \sum_{\ell=0}^{\infty}\sum_{m=1}^{N(d,\ell)} \frac{J_{\ell+\nu}(kr_{0})}{(kr_{0})^{\nu}}\,\frac{H_{\ell+\nu}^{(1)}(kr)}{(kr)^{\nu}}\,Y_{\ell,m}(\omega)\,\overline{Y_{\ell,m}}(\omega_{0}), \quad r>r_{0}.
\end{equation}
This separates the singular behaviour at $r=r_{0}$ (encoded in the Hankel functions) from the regular behaviour at the origin (encoded in the Bessel functions) and is the higher-dimensional analog of Graf's expansion \eqref{eq:graf}.

Substituting \eqref{eq:spherical-addition-n} into the boundary operator yields the modal decomposition of the boundary data:
\begin{equation}\label{eq:g-modal-ball}
g(\omega) = -\mathcal{B}\phi_{d}(x-x_{0}) = -\frac{i\pi k^{d-2}}{2}\sum_{\ell=0}^{\infty}\sum_{m=1}^{N(d,\ell)} \mathcal{B}\!\left[\frac{H_{\ell+\nu}^{(1)}(kr)}{(kr)^{\nu}}\right] \frac{J_{\ell+\nu}(kr_{0})}{(kr_{0})^{\nu}}\,Y_{\ell,m}(\omega)\,\overline{Y_{\ell,m}}(\omega_{0}),
\end{equation}
where $\mathcal{B}[\,\cdot\,]$ denotes the Robin boundary trace at $r=R$.

\subsection{Separation of variables and coefficient matching}

Since $v$ satisfies the homogeneous Helmholtz equation, separation of variables in spherical coordinates gives
\begin{equation}\label{eq:v-series-ball}
v(r,\omega) = \sum_{\ell=0}^{\infty}\sum_{m=1}^{N(d,\ell)} B_{\ell}\,\frac{J_{\ell+\nu}(kr)}{(kr)^{\nu}}\,Y_{\ell,m}(\omega)\,\overline{Y_{\ell,m}}(\omega_{0}),
\end{equation}
where only the functions $r^{-\nu}J_{\ell+\nu}(kr)$, regular at the origin, appear; the singular functions $r^{-\nu}Y_{\ell+\nu}(kr)$ are excluded by regularity at $r=0$. The coefficient $B_{\ell}$ is independent of $m$ owing to the rotational symmetry of the Robin boundary condition.

Substituting \eqref{eq:v-series-ball} into the boundary condition gives
\begin{equation}\label{eq:v-bc-ball}
g(\omega) = \sum_{\ell=0}^{\infty}\sum_{m=1}^{N(d,\ell)} B_{\ell}\,\mathcal{B}\!\left[\frac{J_{\ell+\nu}(kr)}{(kr)^{\nu}}\right]\,Y_{\ell,m}(\omega)\,\overline{Y_{\ell,m}}(\omega_{0}).
\end{equation}
Direct computation using the Bessel recurrence $J_{\mu}'(z)=\frac{\mu}{z}J_{\mu}(z)-J_{\mu+1}(z)$ yields the explicit Robin traces:
\begin{align}
\mathcal{B}\!\left[\frac{J_{\ell+\nu}(kr)}{(kr)^{\nu}}\right] &= (kR)^{-\nu}\left[\left(\alpha+\frac{\ell}{R}\right)J_{\ell+\nu}(kR)-kJ_{\ell+\nu+1}(kR)\right], \label{eq:trace-j-n} \\[6pt]
\mathcal{B}\!\left[\frac{H_{\ell+\nu}^{(1)}(kr)}{(kr)^{\nu}}\right] &= (kR)^{-\nu}\left[\left(\alpha+\frac{\ell}{R}\right)H_{\ell+\nu}^{(1)}(kR)-kH_{\ell+\nu+1}^{(1)}(kR)\right]. \label{eq:trace-h-n}
\end{align}
Matching the hyperspherical harmonic coefficients in \eqref{eq:g-modal-ball} and \eqref{eq:v-bc-ball} gives
\begin{equation}\label{eq:Bl}
B_{\ell} = -\frac{i\pi k^{d-2}}{2}\,\frac{J_{\ell+\nu}(kr_{0})}{(kr_{0})^{\nu}}\cdot\frac{\mathcal{B}[H_{\ell+\nu}^{(1)}(kr)/(kr)^{\nu}]}{\mathcal{B}[J_{\ell+\nu}(kr)/(kr)^{\nu}]}.
\end{equation}

\subsection{Main result}

\begin{theorem}[Robin Green's function on the $d$-ball]\label{thm:ball}
Let $d\geq 3$ and $\nu=d/2-1$. The Robin Green's function for the Helmholtz equation on the ball $B_{R}(0)\subset\mathbb{R}^{d}$ is given by
\begin{multline}\label{eq:G-ball}
G(x,x_{0}) = \frac{i}{4}\left(\frac{k}{2\pi|x-x_{0}|}\right)^{\nu} H_{\nu}^{(1)}(k|x-x_{0}|) \\
-\frac{i\pi k^{d-2}}{2}\sum_{\ell=0}^{\infty}\sum_{m=1}^{N(d,\ell)} \frac{\mathcal{B}[H_{\ell+\nu}^{(1)}(kr)/(kr)^{\nu}]}{\mathcal{B}[J_{\ell+\nu}(kr)/(kr)^{\nu}]}\,\frac{J_{\ell+\nu}(kr_{0})}{(kr_{0})^{\nu}}\,\frac{J_{\ell+\nu}(kr)}{(kr)^{\nu}}
\,Y_{\ell,m}(\omega)\,\overline{Y_{\ell,m}}(\omega_{0}),\quad
\end{multline}
where $\mathcal{B}[\,\cdot\,]$ is given by \eqref{eq:trace-j-n}--\eqref{eq:trace-h-n}.
\end{theorem}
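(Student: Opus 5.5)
The proof follows the template of Theorem~\ref{thm:disk}. By the decomposition \eqref{eq:decomp-ball} it suffices to check two things. First, $\phi_d$ carries the full singularity, so $\Delta\phi_d+k^2\phi_d=-\delta(\cdot-x_0)$ in $B_R(0)$. Second, the series $v$ defined by \eqref{eq:v-series-ball} with coefficients \eqref{eq:Bl} is a genuine solution of \eqref{eq:v-ball}. Throughout I assume $k$ is not a resonance, i.e. $\mathcal{B}[J_{\ell+\nu}(kr)/(kr)^\nu]\neq0$ for every $\ell$, so that each $B_\ell$ is defined.

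The first step is the boundary data. Since $|x|=R>r_0$, the addition formula \eqref{eq:spherical-addition-n} is valid in a neighbourhood of the sphere $r=R$. For fixed $r_0<R$, the terms of that expansion decay geometrically in $\ell$, uniformly for $r$ in a compact annulus around $R$. This follows from the large-order asymptotics $J_\mu(z)\sim (z/2)^\mu/\Gamma(\mu+1)$ and $|H^{(1)}_\mu(z)|\sim \Gamma(\mu)(2/z)^\mu/\pi$, which give a term bound of order $(r_0/r)^\ell$, combined with the polynomial growth of $\sum_m|Y_{\ell,m}|^2=N(d,\ell)/|\mathbb{S}^{d-1}|$. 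Consequently the series may be differentiated term by term in $r$, and \eqref{eq:g-modal-ball} is a valid, absolutely convergent expansion of $g$ in hyperspherical harmonics. The traces \eqref{eq:trace-j-n}--\eqref{eq:trace-h-n} follow from the product rule applied to $r^{-\nu}J_{\ell+\nu}(kr)$ together with the recurrence $J_\mu'=\frac{\mu}{z}J_\mu-J_{\mu+1}$. The factor $-\nu/r$ coming from $r^{-\nu}$ cancels the $\nu$ in $\mu=\ell+\nu$, which leaves $\ell/R$. The identical computation works for $H^{(1)}$.

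The second step is convergence of $v$, which I expect to be the main obstacle. One needs $|B_\ell|\,|J_{\ell+\nu}(kr)|(kr)^{-\nu}N(d,\ell)$ to be summable uniformly for $r\le R$, or at least on compact subsets. Using the same large-order asymptotics, the trace ratio satisfies
\[
\frac{\mathcal{B}[H^{(1)}_{\ell+\nu}]}{\mathcal{B}[J_{\ell+\nu}]}\approx \frac{H^{(1)}_{\ell+\nu}(kR)}{J_{\ell+\nu}(kR)}\cdot\frac{\alpha+\ell/R+O(1/\ell)}{\alpha+\ell/R+O(1/\ell)}\sim -\frac{i\,\Gamma(\ell+\nu)\Gamma(\ell+\nu+1)}{\pi}\Bigl(\frac{2}{kR}\Bigr)^{2(\ell+\nu)}.
\]
Here the $kH_{\ell+\nu+1}$ term is comparable to $(\ell/R)H_{\ell+\nu}$ and has the opposite sign, so it must be tracked carefully; the $J_{\ell+\nu+1}$ term is lower order. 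Multiplying by $J_{\ell+\nu}(kr_0)J_{\ell+\nu}(kr)$, the general term behaves like $(r_0r/R^2)^\ell$ up to polynomial factors in $\ell$. This is geometrically summable on $|x|\le R$ for fixed $r_0<R$, and the bound stays uniform for derivatives. Hence $v$ converges absolutely and uniformly together with its first derivatives up to the boundary, and on compact subsets in $(x,x_0)$ it converges together with all derivatives.

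The third step assembles the conclusions. Each term $r^{-\nu}J_{\ell+\nu}(kr)Y_{\ell,m}(\omega)$ solves the Helmholtz equation, since it is the regular separated solution. Uniform convergence of derivatives on compacta then gives $\Delta v+k^2v=0$ in $B_R(0)$. Applying $\mathcal{B}$ termwise and using \eqref{eq:Bl} reproduces \eqref{eq:g-modal-ball}, so $\mathcal{B}v=-\mathcal{B}\phi_d$. It follows that $G=\phi_d+v$ satisfies \eqref{eq:robin-ball}. Finally, uniqueness of the Robin problem at non-resonant $k$ identifies this $G$ as the Green's function: a homogeneous solution would be a Robin eigenfunction, and its modal coefficients vanish when $\mathcal{B}[J_{\ell+\nu}]\neq0$.
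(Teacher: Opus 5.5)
Your argument is correct, but it takes a more explicit route than the paper. The paper's proof is three sentences. It says $\phi_d$ carries the singularity, and $v$ satisfies the Helmholtz equation and the Robin condition ``by construction.'' It then invokes elliptic regularity (real-analyticity of $v$) to conclude that the hyperspherical series converges absolutely and uniformly on compact subsets of $B_R(0)$. Read charitably, that route:
\begin{itemize}
\item takes the solution of \eqref{eq:v-ball} supplied by Fredholm theory at non-resonant $k$;
\item identifies its modal coefficients with \eqref{eq:Bl} through the radial ODE;
\item gets interior convergence from smoothness.
\end{itemize}
The Robin condition then comes from the abstract solution rather than from the series. This is short, but it leaves the non-resonance hypothesis and the existence step implicit.

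You instead verify directly, using the large-order asymptotics of $J_\mu$ and $H^{(1)}_\mu$, that the general term of $v$ is a polynomial in $\ell$ times $(r_0r/R^2)^{\ell}$. This gives absolute, uniform convergence together with first derivatives up to $|x|=R$, so term-by-term matching of Robin traces is justified. You also state the non-resonance assumption explicitly and give the uniqueness argument that identifies $\phi_d+v$ as \emph{the} Green's function. The cost is some Bessel bookkeeping. In return you get an explicit geometric rate and a self-contained proof that does not depend on abstract existence theory.

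One slip needs fixing. In your displayed trace ratio, the numerator factor is not $\alpha+\ell/R+O(1/\ell)$. Since $kH^{(1)}_{\mu+1}(kR)=\frac{2\mu}{R}H^{(1)}_\mu(kR)\bigl(1+O(\ell^{-2})\bigr)$, you get
\[
\mathcal{B}[H^{(1)}_{\mu}(kr)/(kr)^{\nu}]=(kR)^{-\nu}H^{(1)}_\mu(kR)\Bigl(\alpha-\frac{\ell+2\nu}{R}+O(1/\ell)\Bigr).
\]
So the ratio of the correction factors tends to $-1$, not $1$. The trace ratio is therefore asymptotic to $+\frac{i}{\pi}\Gamma(\mu)\Gamma(\mu+1)\bigl(2/(kR)\bigr)^{2\mu}$, which is exactly the cancellation you describe in words. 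Only the modulus enters your summability estimate, so the conclusion is unaffected.
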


\begin{proof}
The first term is the free-space fundamental solution $\phi_{d}$ satisfying the singular equation. The second term is the auxiliary function $v$ from \eqref{eq:v-series-ball} and \eqref{eq:Bl}; it satisfies the homogeneous Helmholtz equation and the Robin boundary condition by construction. Elliptic regularity implies $v$ is real-analytic, hence its hyperspherical harmonic series converges absolutely and uniformly on compact subsets of $B_{R}(0)$.
\end{proof}

\begin{corollary}[Three-dimensional ball]\label{cor:3d}
For $d=3$ we have $\nu=1/2$, and the half-integer Bessel functions reduce to the spherical Bessel functions:
\[
j_{\ell}(z)=\sqrt{\frac{\pi}{2z}}\,J_{\ell+1/2}(z),\qquad
h_{\ell}^{(1)}(z)=\sqrt{\frac{\pi}{2z}}\,H_{\ell+1/2}^{(1)}(z).
\]
Consequently \eqref{eq:G-ball} becomes
\begin{equation}\label{eq:G-3d}
G(x,x_{0})=\frac{e^{ik|x-x_{0}|}}{4\pi|x-x_{0}|}-ik\sum_{\ell=0}^{\infty}\sum_{m=1}^{2\ell+1}
\frac{\mathcal{B}[h_{\ell}^{(1)}(kr)]}{\mathcal{B}[j_{\ell}(kr)]}\,
j_{\ell}(kr_{0})\,j_{\ell}(kr)\,
Y_{\ell,m}(\omega)\,\overline{Y_{\ell,m}}(\omega_{0}),
\end{equation}
with
\[
\mathcal{B}[j_{\ell}(kr)]=\left(\alpha+\frac{\ell}{R}\right)j_{\ell}(kR)-kj_{\ell+1}(kR),
\]
and analogously for $h_{\ell}^{(1)}$.  This is the classical Mie-type expansion for the impedance sphere (Leontovich boundary).
\end{corollary}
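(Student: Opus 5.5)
Specialise Theorem~\ref{thm:ball} to $d=3$, so $\nu=1/2$, and rewrite each ingredient with spherical Bessel functions. There are three pieces: the fundamental solution, the prefactor together with the radial factors, and the Robin traces. Each needs only the half-integer identities $J_{\ell+1/2}(z)=\sqrt{2z/\pi}\,j_\ell(z)$ and $H^{(1)}_{\ell+1/2}(z)=\sqrt{2z/\pi}\,h^{(1)}_\ell(z)$, plus elementary closed forms.

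\emph{Fundamental solution.} With $\nu=1/2$ and $\rho=|x-x_0|$,
\[
\phi_3=\frac{i}{4}\Bigl(\frac{k}{2\pi\rho}\Bigr)^{1/2}H^{(1)}_{1/2}(k\rho).
\]
Substituting $H^{(1)}_{1/2}(z)=-i\sqrt{2/(\pi z)}\,e^{iz}$ gives
\[
\phi_3=\frac{i}{4}\cdot\frac{\sqrt{k}}{\sqrt{2\pi\rho}}\cdot(-i)\frac{\sqrt2}{\sqrt{\pi k\rho}}\,e^{ik\rho}=\frac{e^{ik\rho}}{4\pi\rho},
\]
which is the first term of \eqref{eq:G-3d}.

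\emph{Prefactor and radial factors.} For the series, note $k^{d-2}=k$ and
\[
\frac{J_{\ell+1/2}(kr)}{(kr)^{1/2}}=\sqrt{\frac{2}{\pi}}\,j_\ell(kr),
\]
and likewise for $H^{(1)}$. Since $\mathcal{B}=\partial_r+\alpha$ is linear, the constant factor $\sqrt{2/\pi}$ passes through $\mathcal{B}$. It therefore cancels in the ratio $\mathcal{B}[H]/\mathcal{B}[J]$ and survives squared from the two $J$ factors at $r_0$ and $r$. The prefactor becomes
\[
-\frac{i\pi k}{2}\cdot\frac{2}{\pi}=-ik,
\]
as claimed. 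Finally, $N(3,\ell)=(2\ell+1)\Gamma(\ell+1)/(\Gamma(\ell+1)\Gamma(2))=2\ell+1$, which gives the range of $m$.

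\emph{Robin traces.} These are the only step needing care. The stated trace $(\alpha+\ell/R)j_\ell(kR)-kj_{\ell+1}(kR)$ follows either of two ways.
\begin{itemize}
\item From the spherical recurrence $j_\ell'(z)=\frac{\ell}{z}j_\ell(z)-j_{\ell+1}(z)$, so that $\partial_r j_\ell(kr)|_{R}=\frac{\ell}{R}j_\ell(kR)-kj_{\ell+1}(kR)$.
\item Equivalently, by multiplying \eqref{eq:trace-j-n} by $\sqrt{\pi/2}$, which converts $(kR)^{-1/2}J_{\ell+1/2}$ into $j_\ell$ and $(kR)^{-1/2}J_{\ell+3/2}$ into $j_{\ell+1}$.
\end{itemize}
The Hankel trace follows identically, because $h^{(1)}_\ell$ satisfies the same recurrence. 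The main point to check is consistency: \eqref{eq:trace-j-n} already absorbed the derivative of the weight $r^{-\nu}$, so the shift from $\mu/z$ with $\mu=\ell+\nu$ down to $\ell/z$ must match the spherical recurrence. It does, since $\frac{d}{dz}\bigl(z^{-1/2}J_{\ell+1/2}\bigr)=z^{-1/2}\bigl(\frac{\ell}{z}J_{\ell+1/2}-J_{\ell+3/2}\bigr)$.

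The Mie-type remark is interpretive: \eqref{eq:G-3d} is the standard impedance-sphere interior expansion, so nothing further needs proof.
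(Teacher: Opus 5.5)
Your proposal is correct and is exactly the reduction the paper intends; the paper gives no separate proof beyond stating the half-integer identities. You set $\nu=1/2$ in Theorem~\ref{thm:ball}, use $H^{(1)}_{1/2}(z)=-i\sqrt{2/(\pi z)}\,e^{iz}$ for the free-space term and $z^{-1/2}J_{\ell+1/2}(z)=\sqrt{2/\pi}\,j_\ell(z)$ (likewise for $H^{(1)}$) for the modal factors; the factor $2/\pi$ turns $-i\pi k/2$ into $-ik$ and cancels in the trace ratio, while your checks of $N(3,\ell)=2\ell+1$ and of the trace via $j_\ell'(z)=\frac{\ell}{z}j_\ell(z)-j_{\ell+1}(z)$, consistent with \eqref{eq:trace-j-n}, supply the remaining details.
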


\begin{remark}[Dimensional analogy and Mie theory]\label{rem:analogy}
The formulas for dimension $d=2$ \eqref{eq:G-disk} and arbitrary $d\geq 3$ \eqref{eq:G-ball} share a common algebraic structure. In every dimension the Robin trace takes the form
\[
\mathcal{B}[r^{-\nu}Z_{\ell+\nu}(kr)] = (kR)^{-\nu}\left[\left(\alpha+\frac{\ell}{R}\right)Z_{\ell+\nu}(kR)-kZ_{\ell+\nu+1}(kR)\right],
\]
where $Z_{\mu}$ stands for either $J_{\mu}$ or $H_{\mu}^{(1)}$ and $\nu=0$ in 2D (since $J_{\ell}(z)/z^{0}=J_{\ell}(z)$) while $\nu=d/2-1$ in dimension $d$. This reflects the unified radial ODE
\[
u''+\frac{d-1}{r}u'+\left(k^{2}-\frac{\ell(\ell+d-2)}{r^{2}}\right)u=0,
\]
whose regular solution at the origin is $r^{-\nu}J_{\ell+\nu}(kr)$.

In the Dirichlet limit $\alpha\to+\infty$, the Robin condition reduces to the perfectly conducting boundary condition, and for $d=3$ the coefficients \eqref{eq:Bl} formally resemble the classical Mie scattering coefficients. For finite $\alpha>0$, the Robin generalisation yields the exact solution for impedance spheres under the Leontovich boundary condition, the standard model in electromagnetic and acoustic scattering by lossy or coated particles \cite{SeniorVolakis1995}.
\end{remark}

\section{Resonance Spectra}
\label{sec:spectra}

To treat the disk ($d=2$) and the $d$-ball ($d\geq3$) in a unified way, we introduce
\begin{equation}
\label{eq:nu-def}
\nu:=\frac{d}{2}-1\geq0,
\end{equation}
and let $\ell=0,1,2,\ldots$ denote the angular quantum number (in $2$D this is the Fourier mode index $n$ of Section~\ref{sec:disk}; in $d\geq3$ it is the degree of the hyperspherical harmonics of Section~\ref{sec:ball}).  The radial Bessel order is $\mu=\ell+\nu$.  From Sections~\ref{sec:disk} and~\ref{sec:ball}, the characteristic equation governing the poles of the Green's function is
\begin{equation}
\label{eq:char-unified}
\Delta_{\ell}(k;\alpha)=\Bigl(\alpha+\frac{\ell}{R}\Bigr)J_{\mu}(kR)-kJ_{\mu+1}(kR)=0,
\qquad\mu=\ell+\nu,
\end{equation}
or equivalently, using $J_{\mu}'(z)=\frac{\mu}{z}J_{\mu}(z)-J_{\mu+1}(z)$,
\begin{equation}
\label{eq:char-alt}
kJ_{\mu}'(kR)+\Bigl(\alpha-\frac{\nu}{R}\Bigr)J_{\mu}(kR)=0.
\end{equation}
The positive roots $k_{\ell,p}(\alpha)$, $p=1,2,\ldots$, are the resonant wavenumbers of the Robin cavity.

\begin{remark}[Universality of the characteristic equation]
\label{rem:universal}
The explicit characteristic determinant $\Delta_{\ell}(k;\alpha)$
in~\eqref{eq:char-unified} encodes the complete spectral structure
of the Robin problem on the ball: the location of all resonant
wavenumbers, their analyticity and strict monotonicity in $\alpha$,
the interpolation between Neumann and Dirichlet eigenvalues, the
low-frequency spectral gap, the high-frequency mode spacing, and
the full Neumann-Dirichlet limit asymptotic expansion.  Once the explicit
form of $\Delta_{\ell}(k;\alpha)$ is available, every quantitative
spectral property can be extracted by elementary Bessel analysis,
without recourse to abstract operator theory or Dirichlet-to-Neumann
duality.  This universality provides a computable benchmark against which general-domain results~\cite{Daners2000,Filonov2004,
Freitas2021,Kennedy2017,Ognibene2025} can be tested and calibrated.
\end{remark}

\subsection{Fundamental spectral properties}
\label{subsec:fundamental}

\begin{theorem}
\label{thm:spectral-basic}
Let $d\geq2$, $\nu=d/2-1$, $\ell\geq0$, and $\alpha>0$.  Then
\begin{enumerate}
\item
\label{item:discrete}
\textbf{Discreteness and reality.}  For each fixed $\ell$, the roots $k_{\ell,p}(\alpha)$, $p=1,2,\ldots$, are positive, real, form an increasing sequence with $k_{\ell,p}\to+\infty$, and each corresponding radial eigenfunction is unique up to a constant multiple.
\item
\label{item:analytic}
\textbf{Analyticity.}  The characteristic determinant $\Delta_{\ell}(k;\alpha)$ is real-analytic in $(k,\alpha)\in(0,+\infty)^{2}$.  Consequently, each branch $k_{\ell,p}(\alpha)$ is real-analytic in $\alpha$.
\item
\label{item:monotone}
\textbf{Strict monotonicity.}  For each fixed $\ell$ and $p$, $k_{\ell,p}(\alpha)$ is strictly monotonically increasing in $\alpha\in(0,+\infty)$.
\item
\label{item:interp}
\textbf{Interpolation and bounds.}  As $\alpha\to0^{+}$,
\begin{equation}
\label{eq:Neumann-limit}
k_{\ell,p}(\alpha)\to\frac{\mu_{\ell,p}'}{R},
\end{equation}
where $\mu_{\ell,p}'$ is the $p$-th positive root of
\begin{equation}
\ell J_{\mu}(x)-xJ_{\mu+1}(x)=0
\quad\bigl(\text{equivalently }J_{\mu}'(x)=\tfrac{\nu}{x}J_{\mu}(x)\bigr),
\end{equation}
with the convention $\mu_{0,1}'=0$ for all $d\geq2$ (corresponding to the constant eigenfunction).  As $\alpha\to+\infty$,
\begin{equation}
\label{eq:Dirichlet-limit}
k_{\ell,p}(\alpha)\to\frac{\mu_{\ell,p}}{R},
\end{equation}
where $\mu_{\ell,p}$ is the $p$-th positive zero of $J_{\mu}(x)$.  Moreover,
\begin{equation}
\label{eq:bounds}
\frac{\mu_{\ell,p}'}{R}<k_{\ell,p}(\alpha)<\frac{\mu_{\ell,p}}{R},
\qquad\forall\;\alpha\in(0,+\infty).
\end{equation}
\end{enumerate}
\end{theorem}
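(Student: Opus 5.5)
The plan is to reduce everything to a regular-singular Sturm--Liouville problem for the radial profile. By Remark~\ref{rem:analogy}, a root $k$ of \eqref{eq:char-unified} corresponds exactly to a nontrivial solution $u(r)=r^{-\nu}J_{\mu}(kr)$ of
\[
-(r^{d-1}u')'+\ell(\ell+d-2)r^{d-3}u=\lambda r^{d-1}u,\qquad \lambda=k^{2},
\]
that is bounded at $r=0$ and satisfies $u'(R)+\alpha u(R)=0$. To check the boundary condition, I will differentiate $r^{-\nu}J_\mu(kr)$ and obtain $u'(R)+\alpha u(R)=R^{-\nu}\bigl[kJ_\mu'(kR)+(\alpha-\nu/R)J_\mu(kR)\bigr]$, which is \eqref{eq:char-alt}. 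The endpoint $r=0$ is limit-point or entrance type for the bounded branch, so the operator with the Robin condition at $R$ is self-adjoint on $L^{2}((0,R),r^{d-1}dr)$ with compact resolvent.

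\textbf{Item~\ref{item:discrete}.} The quadratic form
\[
\mathfrak a_\alpha[u]=\int_0^R |u'|^{2}r^{d-1}\,dr+\ell(\ell+d-2)\int_0^R|u|^{2}r^{d-3}\,dr+\alpha R^{d-1}|u(R)|^{2}
\]
gives reality of $\lambda$, discreteness, and $\lambda_p\to\infty$. It also gives positivity for $\alpha>0$: if $\mathfrak a_\alpha[u]=0$, then $u$ is constant and $u(R)=0$, so $u\equiv0$. This rules out $k=0$ and imaginary $k$. Simplicity follows because the bounded solution at the origin is unique up to scaling; the other solution involves $Y_\mu$ (or $\log r$ when $\mu=0$).

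\textbf{Item~\ref{item:analytic}.} Analyticity of $\Delta_\ell$ in $(k,\alpha)$ is immediate from the entire power series of $z^{-\mu}J_\mu(z)$. For the branches, I will apply the analytic implicit function theorem, which requires $\partial_k\Delta_\ell\neq0$ at each root. I intend to derive this from the Lommel/Green identity. Set $u_k=r^{-\nu}J_\mu(kr)$ and integrate $u_k\cdot(\text{equation for }u_{k'})-u_{k'}\cdot(\text{equation for }u_k)$, then let $k'\to k$. This yields, up to a nonzero factor,
\[
\partial_k\Delta_\ell(k;\alpha)\,u_k(R)\propto 2k\int_0^R u_k^{2}r^{d-1}\,dr\neq0.
\]
Here $u_k(R)\neq0$ at a root, since otherwise $u'(R)=0$ as well, forcing $u\equiv0$.

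\textbf{Item~\ref{item:monotone}.} The same computation, or equivalently the Hellmann--Feynman formula for the form $\mathfrak a_\alpha$, gives
\[
\frac{d}{d\alpha}k_{\ell,p}^{2}=\frac{R^{d-1}u(R)^{2}}{\int_0^R u^{2}r^{d-1}\,dr}>0,
\]
which proves strict monotonicity. Since the roots are simple and analytic in $\alpha$, branches cannot collide. Hence the labelling by $p$ (the $p$-th eigenvalue, equivalently $u$ has $p-1$ interior zeros by Sturm oscillation) is preserved along each branch.

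\textbf{Item~\ref{item:interp}.} For the limits and bounds I will use min--max. The forms increase in $\alpha$, with $\mathfrak a_0$ the Neumann form and the Dirichlet form being the restriction to $u(R)=0$. This gives
\[
\lambda_p^{N}\le\lambda_p(\alpha)\le\lambda_p^{D},
\]
and strictness follows from the strict monotonicity already proved together with the limits. For the limits, $\Delta_\ell$ depends continuously on $\alpha$ with $\Delta_\ell(k;0)$ equal to the Neumann determinant $\ell J_\mu-kRJ_{\mu+1}$ (up to the factor $R$). Moreover $\Delta_\ell/\alpha\to J_\mu(kR)$ locally uniformly as $\alpha\to\infty$. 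Hurwitz's theorem, applied on compact $k$-intervals together with the monotone bounds, then forces $k_{\ell,p}(\alpha)$ to converge to the $p$-th root of the limiting equation. For $\ell=0$ there is a zero Neumann eigenvalue with constant eigenfunction, which accounts for the convention $\mu'_{0,1}=0$; the Rayleigh quotient with small $\alpha$ shows $k_{0,1}(\alpha)\to0^{+}$.

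I expect the main obstacle to be the careful bookkeeping of indices in the Neumann limit. One must check that the $p$-th Robin eigenvalue matches the $p$-th positive root $\mu'_{\ell,p}$ of $\ell J_\mu(x)-xJ_{\mu+1}(x)$, including whether $x=0$ is a root to be counted when $\ell\ge1$. I would settle this by comparing nodal counts: $u$ has $p-1$ zeros in $(0,R)$ in both the Robin and Neumann problems. I would also verify that the Neumann operator has no zero eigenvalue for $\ell\ge1$, since its form is then positive.
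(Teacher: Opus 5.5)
Your proposal is correct and follows the same route as the paper: self-adjoint Sturm--Liouville/quadratic-form theory for (i), the analytic implicit function theorem for (ii), the Hellmann--Feynman identity $\frac{d}{d\alpha}k_{\ell,p}^{2}=R^{d-1}u(R)^{2}/\int_0^R u^{2}r^{d-1}\,dr$ for (iii), and monotonicity plus passage to the limit in $\Delta_\ell$ for (iv). Your additions supply justifications that the paper only asserts: the Lommel/Green identity proving $\partial_k\Delta_\ell\neq0$ (the paper instead infers this from the eigenspace being one-dimensional), the ODE-uniqueness argument for $u(R)\neq0$, and the min--max/Hurwitz/nodal-count treatment of the limits and of the $\mu_{0,1}'=0$ labelling.
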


\begin{proof}
(i) Equation~\eqref{eq:char-unified} is the spectral condition for the self-adjoint Sturm--Liouville problem
\begin{equation}
\label{eq:SL}
\begin{cases}
u''+\dfrac{d-1}{r}u'+\Bigl(\lambda-\dfrac{\ell(\ell+d-2)}{r^{2}}\Bigr)u=0, & 0<r<R,\\[6pt]
|u(0)|<+\infty,\quad u'(R)+\alpha u(R)=0,
\end{cases}
\end{equation}
 with $\lambda=k^{2}$.  Standard spectral theory for singular self-adjoint Sturm--Liouville operators on $(0,R)$ yields the result.\smallskip

(ii) The Bessel functions $J_{\mu}(z)$ are entire, hence $\Delta_{\ell}(k;\alpha)$ is real-analytic.  By (i), each root is simple, so $\partial_{k}\Delta_{\ell}|_{k=k_{\ell,p}}\neq0$.  The implicit function theorem for real-analytic functions guarantees local analyticity; global analyticity follows from simple connectedness and root separation.\smallskip

(iii) Consider the energy functional for the Robin Laplacian on $\overline{B_{R}(0)}$ restricted to the angular sector indexed by $\ell$:
\begin{equation}
I[u;\alpha]=\int_{0}^{R}\Bigl(|u'(r)|^{2}+\frac{\ell(\ell+d-2)}{r^{2}}|u(r)|^{2}\Bigr)r^{d-1}\,dr+\alpha R^{d-1}|u(R)|^{2},
\end{equation}
where $u$ ranges over all non-zero radial functions in $H^{1}(0,R)$ with $|u(0)|<+\infty$
(the first eigenvalue is the infimum of the Rayleigh quotient, the higher ones are given by the min--max principle).  Let $u_{\ell,p}$ be the normalised eigenfunction corresponding to $\lambda_{\ell,p}=k_{\ell,p}^{2}$, which is a critical point of $I$ subject to $\|u\|_{L^{2}(0,R;r^{d-1}dr)}=1$.  Then $\lambda_{\ell,p}=I[u_{\ell,p};\alpha]$.  Differentiating with respect to $\alpha$:
\begin{equation}
\frac{\partial\lambda_{\ell,p}}{\partial\alpha}=\frac{\partial I}{\partial\alpha}\Big|_{u=u_{\ell,p}}+\Bigl\langle\frac{\delta I}{\delta u}\Big|_{u=u_{\ell,p}},\frac{\partial u_{\ell,p}}{\partial\alpha}\Bigr\rangle.
\end{equation}
Since $u_{\ell,p}$ is a critical point of $I$ subject to the normalisation constraint, 
the first variation of $I$ at $u_{\ell,p}$ is proportional to the variation of the constraint, 
whose pairing with $\partial_{\alpha}u_{\ell,p}$ vanishes because 
$\frac{d}{d\alpha}\|u_{\ell,p}\|^{2}=0$ (the envelope, or Hellmann--Feynman, argument).  Moreover,
\begin{equation}
\frac{\partial I}{\partial\alpha}\Big|_{u=u_{\ell,p}}=R^{d-1}|u_{\ell,p}(R)|^{2}>0,
\end{equation}
because the eigenfunction $u_{\ell,p}$ is non-trivial and therefore does not vanish identically on the boundary.  Hence
\begin{equation}
\frac{\partial\lambda_{\ell,p}}{\partial\alpha}=R^{d-1}|u_{\ell,p}(R)|^{2}>0.
\end{equation}
Since $\lambda_{\ell,p}=k_{\ell,p}^{2}$ and $k_{\ell,p}>0$, it follows that
\begin{equation}
\frac{\partial k_{\ell,p}}{\partial\alpha}=\frac{1}{2k_{\ell,p}}\frac{\partial\lambda_{\ell,p}}{\partial\alpha}>0,
\end{equation}
and $k_{\ell,p}(\alpha)$ is strictly increasing on $(0,+\infty)$.

(iv) The limits follow by letting $\alpha\to0$ and $\alpha\to+\infty$ in~\eqref{eq:char-unified}.  The bounds~\eqref{eq:bounds} follow from strict monotonicity.
\end{proof}

\begin{remark}[Explicit impedance formula]
\label{rem:inverse}
Strict monotonicity of each spectral branch yields a bijection $\alpha\mapsto k_{\ell,p}(\alpha)$ onto $(\mu_{\ell,p}'/R,\mu_{\ell,p}/R)$.  Consequently, for any measured resonant wavenumber $k$ lying strictly between the Neumann and Dirichlet eigenvalues of a known mode $(\ell,p)$, the impedance is recovered explicitly by
\begin{equation}
\label{eq:impedance-inverse}
\alpha=\frac{kJ_{\mu+1}(kR)}{J_{\mu}(kR)}-\frac{\ell}{R},\qquad\mu=\ell+\nu.
\end{equation}
\end{remark}

\begin{remark}[Geometry and physics of the spectral interpolation]
\label{rem:interp-physics}
The monotonicity $\mathrm{d}k_{\ell,p}/\mathrm{d}\alpha>0$ shows that the Robin spectrum interpolates continuously between the Neumann and Dirichlet extremes, with each mode $(\ell,p)$  evolving independently and without crossing. This interpolation is sharp: the bounds~\eqref{eq:bounds} are uniform in $\alpha$ and the endpoints are attained only in the limits. Filonov~\cite{Filonov2004} proved strict separation for general Lipschitz domains, ensuring non-zero width.
When $\alpha\to0^{+}$ one recovers the Neumann (hard) limit, and when $\alpha\to+\infty$ the Dirichlet (soft) limit; intermediate values of $\alpha$ correspond to impedance-matched cavities.  In quantum mechanics, $\alpha$ is the $\delta$-shell strength; our formulas give the exact resolvent for partial confinement~\cite{Albeverio2004}.
\end{remark}

\begin{remark}[Neumann--Dirichlet limiting asymptotics]
\label{rem:limits}
The explicit characteristic equation~\eqref{eq:char-unified} yields the complete
asymptotic expansions of $k_{\ell,p}(\alpha)$ at both extremes of the impedance.

\medskip\noindent\textbf{(i) Neumann limit ($\alpha\to0^{+}$).}
For modes with $\mu_{\ell,p}'>0$ (i.e. $\ell\ge1$ or $p\ge2$),
\begin{equation}
\label{eq:Neumann-asymp}
k_{\ell,p}(\alpha)=\frac{\mu_{\ell,p}'}{R}
+\frac{\mu_{\ell,p}'}{(\mu_{\ell,p}')^{2}-\ell(\ell+d-2)}\,
\alpha+O(\alpha^{2}),\qquad\alpha\to0^{+}.
\end{equation}
For the radially symmetric ground mode $(\ell,p)=(0,1)$, where $\mu_{0,1}'=0$,
the characteristic equation gives instead
\begin{equation}
\label{eq:Neumann-asymp-00}
k_{0,1}(\alpha)=\frac{\sqrt{d\alpha}}{R}+O(\alpha^{3/2}),\qquad\alpha\to0^{+}.
\end{equation}

\medskip\noindent\textbf{(ii) Dirichlet limit ($\alpha\to+\infty$).}
\begin{equation}
\label{eq:Dirichlet-asymp}
k_{\ell,p}(\alpha)=\frac{\mu_{\ell,p}}{R}-\frac{\mu_{\ell,p}}{\alpha R^{2}}
-\frac{(d-3)\,\mu_{\ell,p}}{2\alpha^{2}R^{3}}+O\!\left(\frac{1}{\alpha^{3}}\right),
\qquad\alpha\to+\infty.
\end{equation}
For $d=3$ the $O(1/\alpha^{2})$
term vanishes identically, a dimension-specific phenomenon with no counterpart
in the general Lipschitz theory~\cite{Ognibene2025}.
\end{remark}

\subsection{Low-frequency spectral gap}
\label{subsec:gap}

\begin{theorem}
\label{thm:gap}
Let $d\geq2$, $\nu=d/2-1$, $\ell\geq0$, and $\alpha>0$.  Then
\begin{enumerate}
\item
\textbf{Asymptotics as $k\to0^{+}$.}  For the radially symmetric mode $\ell=0$,
\begin{equation}
\label{eq:Delta0-smallk}
\Delta_{0}(k)=\frac{(kR/2)^{\nu}}{\Gamma(\nu+1)}\Bigl[\alpha-\frac{\alpha+\frac{2}{R}}{4(\nu+1)}(kR)^{2}
+\frac{\alpha R+4}{32R(\nu+1)(\nu+2)}(kR)^{4}+O(k^{6})\Bigr].
\end{equation}
For $\ell\geq1$,
\begin{equation}
\label{eq:Deltal-smallk}
\Delta_{\ell}(k)=\frac{R^{\mu}}{2^{\mu}\Gamma(\mu+1)}\Bigl(\alpha+\frac{\ell}{R}\Bigr)k^{\mu}+O(k^{\mu+2}),\qquad\mu=\ell+\nu.
\end{equation}
\item
\textbf{Low-frequency spectral gap.}  There exists $k_{c}>0$ such that $\Delta_{\ell}(k)>0$ for all $0<k<k_{c}$ and all $\ell\geq0$.  Consequently, the Robin Laplacian has no eigenvalues in $(0,k_{c}^{2})$; equivalently, no resonant modes exist in the wavenumber band $(0,k_{c})$.
\item
\textbf{Small-impedance estimate for the critical wavenumber.}  As $\beta=\alpha R\to0^{+}$,
\begin{equation}
\label{eq:kc-series-gap}
x_{c}^{2}=d\beta-\frac{d}{d+2}\,\beta^{2}+O(\beta^{3}),\qquad x_{c}=k_{c}R,
\end{equation}
and hence, for $\alpha R\lesssim1$,
\begin{equation}
\label{eq:kc-estimate}
k_{c}\approx\sqrt{\frac{d\alpha}{R}\left(1-\frac{\alpha R}{d+2}\right)}.
\end{equation}
\end{enumerate}
\end{theorem}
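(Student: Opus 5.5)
The plan is to treat the three parts in order. Part (i) is a direct power-series computation. Part (ii) combines (i) with the Sturm--Liouville structure of Theorem~\ref{thm:spectral-basic}. Part (iii) is a perturbative inversion of the $\ell=0$ expansion from (i), using the implicit function theorem.

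\emph{Part (i).} Write $x=kR$ and insert the ascending series
\[
J_{\mu}(x)=\frac{(x/2)^{\mu}}{\Gamma(\mu+1)}\Bigl[1-\frac{x^{2}}{4(\mu+1)}+\frac{x^{4}}{32(\mu+1)(\mu+2)}+O(x^{6})\Bigr]
\]
into \eqref{eq:char-unified}. For $\ell=0$ we have $\mu=\nu$ and $\Delta_0=\alpha J_\nu(x)-kJ_{\nu+1}(x)$. Using $\Gamma(\nu+2)=(\nu+1)\Gamma(\nu+1)$ and $k\,x=x^2/R$, one gets
\[
kJ_{\nu+1}(x)=\frac{(x/2)^{\nu}}{\Gamma(\nu+1)}\Bigl[\frac{x^{2}}{2R(\nu+1)}-\frac{x^{4}}{8R(\nu+1)(\nu+2)}+O(x^{6})\Bigr].
\]
Collecting the coefficients of $x^2$ and $x^4$ then gives exactly the bracket in \eqref{eq:Delta0-smallk}. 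For $\ell\ge1$ only the leading terms are needed. The term $(\alpha+\ell/R)J_\mu$ contributes order $k^\mu$, while $kJ_{\mu+1}$ is $O(k^{\mu+2})$, which yields \eqref{eq:Deltal-smallk}.

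\emph{Part (ii).} From (i), each $\Delta_\ell(k)$ is positive for small $k>0$, because its leading coefficient $\alpha+\ell/R$ is positive. Since $\Delta_\ell$ is continuous in $k$, it remains positive up to its first positive root $k_{\ell,1}(\alpha)$. It therefore suffices to show that $k_c:=\inf_{\ell\ge0}k_{\ell,1}(\alpha)>0$; this uniformity in $\ell$ is the one genuine issue.

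\begin{itemize}
\item For $\ell\ge1$ I will use the energy functional $I[u;\alpha]$ from the proof of Theorem~\ref{thm:spectral-basic}(iii). Since $\alpha>0$, every Rayleigh quotient in the $\ell$-sector is at least $\ell(\ell+d-2)/R^2$, so $k_{\ell,1}\ge\sqrt{\ell(\ell+d-2)}/R$.
\item This lower bound exceeds $k_{0,1}(\alpha)$ for all but finitely many $\ell$. Hence the infimum is a minimum over a finite set of strictly positive numbers.
\item Each of these numbers is strictly positive: for $\ell=0$ the first root is nonzero by Theorem~\ref{thm:spectral-basic}(i) with $\alpha>0$, and for $\ell\ge1$ it lies above $\mu'_{\ell,1}/R>0$ by \eqref{eq:bounds}.
\end{itemize}

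This gives $k_c>0$ and the absence of eigenvalues in $(0,k_c^2)$.

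\emph{Part (iii).} First I identify $k_c$ with $k_{0,1}$ when $\beta$ is small. For $\ell\ge1$, \eqref{eq:bounds} gives $k_{\ell,1}>\mu'_{\ell,1}/R\ge\mu'_{1,1}/R$, which is independent of $\alpha$. On the other hand, $k_{0,1}\to0$ as $\alpha\to0^+$ by \eqref{eq:Neumann-asymp-00}. Hence $x_c=k_{0,1}R$ for small $\beta$.

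Next I multiply the bracket in \eqref{eq:Delta0-smallk} by $R$ and set $\nu+1=d/2$, so that $(\nu+1)(\nu+2)=d(d+2)/4$. This gives the root equation
\[
\beta-\frac{(\beta+2)}{2d}x^{2}+\frac{(\beta+4)}{8d(d+2)}x^{4}+O(x^{6})=0 .
\]
Setting $s=x^2$ and dividing by $\beta$ after rescaling $s=\beta t$, the implicit function theorem gives a unique small root $s(\beta)$, analytic in $\beta$, with $s=d\beta+O(\beta^2)$. To find the next coefficient I use the ansatz $s=d\beta+c\beta^2$. The $O(\beta)$ terms cancel, and the $\beta^2$ terms give $-\tfrac12-\tfrac{c}{d}+\tfrac{d}{2(d+2)}=0$, so $c=-d/(d+2)$. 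This is \eqref{eq:kc-series-gap}. Taking the square root and factoring gives \eqref{eq:kc-estimate}.

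The main obstacle is the uniformity over infinitely many $\ell$ in part (ii); the Rayleigh-quotient lower bound $k_{\ell,1}\ge\sqrt{\ell(\ell+d-2)}/R$ is the tool I rely on to handle it.
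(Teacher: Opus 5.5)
Your proof is correct. Part (i) and the series inversion in (iii) are the same as in the paper, but you handle the uniformity in $\ell$ in (ii) differently.

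The paper compares consecutive angular sectors. The centrifugal term $\ell(\ell+d-2)/r^{2}$ grows with $\ell$, so evaluating the $\ell$-sector Rayleigh quotient at the $(\ell+1)$-sector ground state gives $k_{0,1}<k_{1,1}<k_{2,1}<\cdots$. This yields $k_c=k_{0,1}$ for every $\alpha>0$ at once, and (iii) then reduces immediately to the $\ell=0$ equation.

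You instead use the absolute bound $k_{\ell,1}\ge\sqrt{\ell(\ell+d-2)}/R$ to discard all but finitely many sectors. This proves $k_c>0$, but it does not identify which sector attains the minimum. You therefore need the extra small-$\beta$ argument in (iii) to show $k_c=k_{0,1}$.

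What each route buys: yours gives an explicit, $\alpha$-uniform lower bound on the high-$\ell$ thresholds. The paper's gives $k_c=k_{0,1}$ for all $\beta$, which the subsequent remark and the global approximation \eqref{eq:kc-global} rely on. It would cost you only one more line with the Rayleigh-quotient tool you already use.

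Three small repairs are needed.

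\emph{First}, in (iii) you assert $\mu'_{\ell,1}\ge\mu'_{1,1}$ without proof. You do not need it: your own bound gives $k_{\ell,1}\ge\sqrt{d-1}/R$ for all $\ell\ge1$, independently of $\alpha$.

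\emph{Second}, for $k_{0,1}\to0$ you can avoid citing the unproved remark \eqref{eq:Neumann-asymp-00}, whose prefactor should in any case read $\sqrt{d\alpha/R}$ to be consistent with (iii). The constant test function in the $\ell=0$ sector gives $k_{0,1}^2\le I[1;\alpha]/\|1\|^2=\alpha R^{d-1}/(R^{d}/d)=d\alpha/R$. Hence $x_c^2\le d\beta$, which is consistent with the sign of the $\beta^2$ correction.

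\emph{Third}, the rescaled implicit function theorem only gives uniqueness for $t=s/\beta$ near $d$, not among all small $s$. To conclude $x_c^2=s(\beta)$, write the $\ell=0$ equation exactly as $\beta\,a(s)=s\,b(s)$, where $a,b$ are entire in $s=x^2$ with $a(0)=1$ and $b(0)=1/d$. Then $s\mapsto s\,b(s)/a(s)$ has derivative $1/d>0$ at $0$, so it is strictly increasing on a fixed neighbourhood of $s=0$. For small $\beta$ there is therefore exactly one root there, and $x_c^2\to0$ forces $x_c^2$ to be that root.
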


\begin{proof}
(i) \eqref{eq:Delta0-smallk} and~\eqref{eq:Deltal-smallk} follow from the small-argument expansion of $J_{\mu}(z)$.
(ii) The asymptotics show $\Delta_{\ell}(k)>0$ near $k=0$ while Theorem~\ref{thm:spectral-basic} guarantees eventual sign changes.  The first positive root $k_{c,\ell}$ exists for each $\ell$.  By the Sturm--Liouville variational principle, increasing $\ell$ raises the effective centrifugal barrier $\ell(\ell+d-2)/r^{2}$, hence $k_{c,0}<k_{c,1}<k_{c,2}<\cdots$.  Thus $k_{c}:=\min_{\ell\geq0}k_{c,\ell}=k_{c,0}>0$. (iii) Write $\Delta_{0}(k;\alpha)=0$ as $xJ_{\nu+1}(x)=\beta J_{\nu}(x)$ with $x=kR$ and $\beta=\alpha R$.  Inserting the power series of $J_{\nu}$ and $J_{\nu+1}$ and solving for $t=x^{2}$ as a series in $\beta$ gives $t=2(\nu+1)\beta-\frac{\nu+1}{\nu+2}\beta^{2}+O(\beta^{3})$, 
which is~\eqref{eq:kc-series-gap} since $2(\nu+1)=d$ and $\frac{\nu+1}{\nu+2}=\frac{d}{d+2}$; the estimate~\eqref{eq:kc-estimate} follows by truncation.
\end{proof}

\begin{remark}
The global critical wavenumber $k_{c}=k_{c,0}$ is determined by the radially symmetric $\ell=0$ mode, which has the lowest resonance threshold.  This agrees with the physical intuition that $s$-wave (angularly independent) modes penetrate most easily through the impedance boundary.  Numerically, $k_{c}$ is the first positive root of $\Delta_{0}(k;\alpha)=0$, found by standard root-finding algorithms (e.g., Brent's method).  The explicit estimate~\eqref{eq:kc-estimate} approximates this root accurately when $\alpha R\lesssim1$.  An explicit global approximation of $k_{c}$ for arbitrary $\alpha R$ is given in Section~\ref{sec:kc-estimate}, whose relative error stays below $1.7\%$ on the whole impedance range for $d=2,3,4$; see Table~\ref{tab:kc-global}.
\end{remark}

\begin{remark}[Comparison with prior literature]
The positivity of the first Robin eigenvalue for $\alpha>0$ is a classical consequence of the Rayleigh quotient and the Sturm--Liouville framework, see e.g.~Daners~\cite{Daners2000} and Freitas~\cite{Freitas2021}.  What Theorem~\ref{thm:gap} adds is a quantitative, explicit asymptotic estimate~\eqref{eq:kc-estimate} for the size of the low-frequency spectral gap, obtained directly from the small-$k$ expansion of the characteristic determinant and valid in every dimension $d\geq2$.
\end{remark}

\subsection{High-frequency asymptotic resonance spacing}
\label{subsec:high-freq}

\begin{theorem}
\label{thm:spacing}
Let $d\geq2$, $\nu=d/2-1$, $\ell\geq0$, and $\alpha>0$.  Then
\begin{enumerate}
\item
\textbf{Asymptotics of the characteristic determinant as $k\to+\infty$.}
\begin{equation}
\label{eq:Delta-largek}
\Delta_{\ell}(k)=-\sqrt{\frac{2k}{\pi R}}\,
\Bigl[\sin\theta(kR)-\frac{C_{\ell,d}(\alpha)}{kR}\cos\theta(kR)+O\bigl((kR)^{-2}\bigr)\Bigr],
\qquad k\to+\infty,
\end{equation}
where
\begin{equation}
\label{eq:theta-def}
\theta(z):=z-\frac{\mu\pi}{2}-\frac{\pi}{4},\qquad\mu=\ell+\nu,
\end{equation}
and
\begin{equation}
\label{eq:C-def}
C_{\ell,d}(\alpha):=\alpha R+\ell-\frac{(2\ell+d)^{2}-1}{8}.
\end{equation}
\item
\textbf{Universal spacing with second-order correction.}  As $p\to\infty$,
\begin{equation}
\label{eq:spacing}
k_{\ell,p+1}-k_{\ell,p}=\frac{\pi}{R}\Bigl[1-\frac{C_{\ell,d}(\alpha)}{(k_{\ell,p}R)^{2}}+O\bigl((k_{\ell,p}R)^{-3}\bigr)\Bigr].
\end{equation}
\end{enumerate}
The leading-order term $\pi/R$ is independent of $\alpha$, $\ell$, and the boundary condition type: it coincides with the classical spacing of consecutive Bessel zeros given by the McMahon expansion \cite[\S10.21]{Olver2010}, and is consistent with the Weyl asymptotics for the Laplacian on a ball.  The second-order correction, governed by $C_{\ell,d}(\alpha)$, distinguishes the Robin condition from the Dirichlet and Neumann extremes and encodes both the dimension $d$ and the angular mode $\ell$.
\end{theorem}

\begin{proof}
(i)\  We start from the characteristic equation in the form~
\eqref{eq:char-unified},
\begin{equation}
\label{eq:char-proof}
\Delta_{\ell}(k)=\Bigl(\alpha+\frac{\ell}{R}\Bigr)J_{\mu}(kR)-kJ_{\mu+1}(kR),
\qquad\mu=\ell+\nu,
\end{equation}
and employ the large-argument expansions of the Bessel functions~
\cite[\S10.17]{Olver2010}:
\begin{align}
J_{\mu}(z)&=\sqrt{\frac{2}{\pi z}}
\Bigl[\cos\theta(z)-\frac{4\mu^{2}-1}{8z}\sin\theta(z)+O(z^{-2})\Bigr],
\\
J_{\mu+1}(z)&=\sqrt{\frac{2}{\pi z}}
\Bigl[\sin\theta(z)+\frac{4(\mu+1)^{2}-1}{8z}\cos\theta(z)+O(z^{-2})\Bigr],
\end{align}
where we have used $\theta_{\mu+1}(z)=\theta(z)-\pi/2$ in the second line.
Substituting these into~
\eqref{eq:char-proof}, the term of order $(kR)^{-2}$ dropped from the first bracket is $(\alpha R+\ell)\frac{4\mu^{2}-1}{8}(kR)^{-2}\sin\theta(kR)$, which is absorbed by the remainder, and we obtain
\begin{equation}
\Delta_{\ell}(k)=-\sqrt{\frac{2k}{\pi R}}
\Bigl[\sin\theta(kR)
-\frac{\alpha R+\ell-\dfrac{(2\mu+2)^{2}-1}{8}}{kR}\cos\theta(kR)
+O\bigl((kR)^{-2}\bigr)\Bigr].
\end{equation}
Since $\mu=\ell+\nu$ and $\nu=d/2-1$, we have $2\mu+2=2\ell+d$,
whence
\begin{equation}
\alpha R+\ell-\frac{(2\mu+2)^{2}-1}{8}
=\alpha R+\ell-\frac{(2\ell+d)^{2}-1}{8}
=:C_{\ell,d}(\alpha),
\end{equation}
which establishes~
\eqref{eq:Delta-largek}.

(ii)\  The resonant wavenumbers satisfy $\Delta_{\ell}(k_{\ell,p})=0$.
Since $\cos\theta(k_{\ell,p}R)=\pm1+O\bigl((k_{\ell,p}R)^{-2}\bigr)$ at the roots,
division by $\cos\theta(kR)$ in~
\eqref{eq:Delta-largek} is legitimate, and the roots are asymptotic solutions of
\begin{equation}
\tan\theta(kR)=\frac{C_{\ell,d}(\alpha)}{kR}+O\bigl((kR)^{-2}\bigr).
\end{equation}
Set $x_{p}:=k_{\ell,p}R$.  The solutions of this equation satisfy, for some fixed integer $m_{0}$ depending only on the labeling of the roots,
\begin{equation}
\theta(x_{p})=(p+m_{0})\pi+\arctan\Bigl(\frac{C_{\ell,d}(\alpha)}{x_{p}}\Bigr)
+O(x_{p}^{-2})
=(p+m_{0})\pi+\frac{C_{\ell,d}(\alpha)}{x_{p}}+O(x_{p}^{-2}).
\end{equation}
The fixed shift $m_{0}$ is absorbed by relabelling $p\mapsto p+m_{0}$, which does not affect the spacing; with this convention, expanding $\theta(x)=x-\frac{(2\mu+1)\pi}{4}$ gives
\begin{equation}
x_{p}=p\pi+\frac{(2\mu+1)\pi}{4}+\frac{C_{\ell,d}(\alpha)}{p\pi}+O(p^{-2}).
\end{equation}
Hence
\begin{equation}
x_{p+1}-x_{p}=\pi-\frac{C_{\ell,d}(\alpha)}{\pi p^{2}}+O(p^{-3}).
\end{equation}
Since $x_{p}=p\pi+O(1)$, we have $p^{-2}=\pi^{2}x_{p}^{-2}+O(p^{-3})$,
so replacing $p$ by $x_{p}/\pi$ on the right-hand side introduces only
$O(x_{p}^{-3})$ terms, giving
\begin{equation}
x_{p+1}-x_{p}=\pi\Bigl[1-\frac{C_{\ell,d}(\alpha)}{x_{p}^{2}}
+O(x_{p}^{-3})\Bigr].
\end{equation}
Dividing by $R$ yields~
\eqref{eq:spacing}.
\end{proof}

\begin{remark}[Exactness in three dimensions]
\label{rem:exact-3d}
For $d=3$ ($\nu=\tfrac12$) the Bessel orders are half-integers and the asymptotic series of \cite[\S10.17]{Olver2010} terminate; the expansion~\eqref{eq:Delta-largek} is then an \emph{exact} identity.  For instance, for the radially symmetric mode $\ell=0$ one has
\begin{equation}
\Delta_{0}(k)=\sqrt{\frac{2k}{\pi R}}\Bigl[\cos(kR)+\frac{\alpha R-1}{kR}\sin(kR)\Bigr],
\end{equation}
so the resonant wavenumbers are the exact roots of $\tan(kR)=-\dfrac{kR}{\alpha R-1}$, and $C_{0,3}(\alpha)=\alpha R-1$.  In particular, at $\alpha R=1$ the consecutive roots are exactly $(p-\tfrac12)\pi/R$, so the spacing~\eqref{eq:spacing} holds with vanishing correction for all $p$.
\end{remark}

\section{Numerical Validation and Spectral Illustration}\label{sec:numerical}

In this section we verify the explicit series representations numerically and validate the spectral predictions of Theorems~\ref{thm:spectral-basic}--\ref{thm:spacing}.  All computations are performed on the unit disk $B_{1}(0)\subset\mathbb{R}^{2}$ and the unit ball $B_{1}(0)\subset\mathbb{R}^{3}$, with wavenumber $k=3.0$ and impedance $\alpha=1.0$ unless stated otherwise.  The series representations are symmetric, converge absolutely on compact interior subsets, and can be evaluated to high accuracy in any dimension $d\geq 2$.

\subsection{Field structure and convergence}
Since $G=\phi+v$ by construction, it suffices to validate the regularity, homogeneous equation, and Robin boundary condition of the smooth auxiliary function $v$. 
Figure~\ref{fig:field} shows the magnitude of the Robin Green's function on the unit disk ($d=2$) and on the xz-plane cross section of the unit ball ($d=3$), together with the exponential convergence of the modal series.  The source point $x_{0}$ is marked by the red star.  The field exhibits the expected monopole-like singular pattern near the source, with Robin impedance producing a characteristic phase shift at the boundary.  The modal series converges exponentially in both dimensions, reaching machine precision ($\sim 10^{-16}$) at $N\approx 20$.  Algebraic verification of the boundary condition $\mathcal{B}v+\mathcal{B}\phi=0$ confirms the correctness of the modal matching to machine precision.

\begin{figure}[htbp]
\centering
\includegraphics[width=0.95\textwidth]{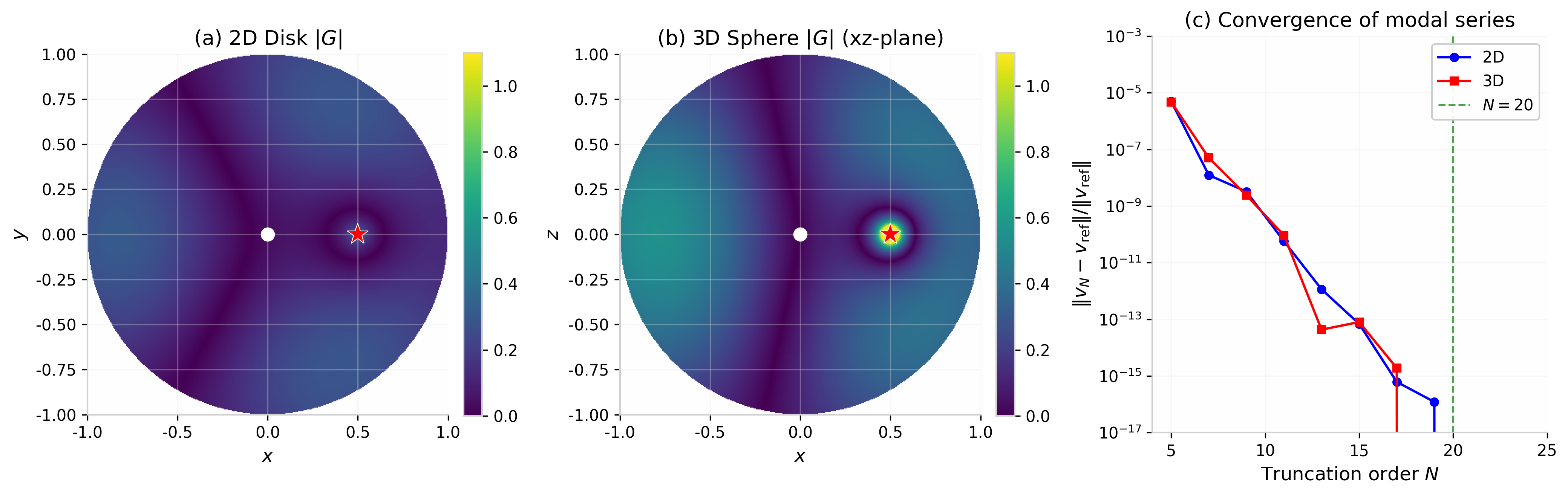}
\caption{(a)~Magnitude of the Robin Green's function on the unit disk ($d=2$).  (b)~Magnitude on the xz-plane cross section of the unit ball ($d=3$).  (c)~Exponential convergence of the modal series: relative error of $v$ versus truncation order $N$ for $k=3$, $R=1$, $\alpha=1$, $r_0=0.5$.  Machine precision is reached at $N\approx 20$ in both dimensions.}
\label{fig:field}
\end{figure}

\subsection{High-wavenumber robustness and modal scaling}

To test short-wavelength behaviour, we take $k=30$ ($\lambda\approx 0.21$) on the unit disk ($d=2$).  This requires resolving nearly ten wavelengths across the diameter.  At $k=30$, $N=20$ is insufficient: the error stagnates and the boundary residual degrades to $10^{-3}$.  This reflects the physical requirement that the number of angular modes must satisfy $N\gtrsim kR$ to resolve the oscillatory Bessel functions.  Table~\ref{tab:high-k} confirms this scaling by increasing $N$ systematically.

\begin{table}[htbp]
\centering
\caption{Convergence of the explicit series for $d=2$, $k=30$ ($\lambda\approx 0.21$) with increasing truncation order $N$ ($\alpha=1.0$, $r_{0}=0.5$, $N_{\text{ref}}=50$)}\label{tab:high-k}
\vspace{0.3cm}
\begin{tabular}{lccc}
\hline
$N$ & $\|v_{N}-v_{\text{ref}}\|/\|v_{\text{ref}}\|$ & Residual $L^{\infty}$ & Residual $L^{2}$ \\
\hline
20 & $3.9\times10^{-2}$ & $7.4\times10^{-3}$ & $4.3\times10^{-3}$ \\
30 & $8.4\times10^{-9}$ & $5.3\times10^{-8}$ & $2.6\times10^{-8}$ \\
35 & $5.1\times10^{-11}$ & $6.3\times10^{-10}$ & $2.8\times10^{-10}$ \\
40 & $5.9\times10^{-13}$ & $1.0\times10^{-11}$ & $4.4\times10^{-12}$ \\
50 & $0$ & $1.1\times10^{-14}$ & $3.8\times10^{-15}$ \\
\hline
\end{tabular}
\end{table}

The error drops by seven orders of magnitude when $N$ reaches $30\approx kR$, and machine precision is attained at $N=40$.  This scaling reflects the Nyquist--Shannon sampling theorem in angular frequency space: $kR$ modes are needed to resolve the highest angular frequency supported by the oscillatory Bessel functions.  Unlike conventional numerical methods, where the pollution effect requires mesh refinement proportional to $k^{3/2}$ or higher, the explicit series reaches arbitrary accuracy by increasing $N$ linearly with $k$ in any dimension $d\geq 2$.

\subsection{Spectral verification}

To validate Theorem~\ref{thm:spectral-basic}, we compute the resonant wavenumbers $k_{\ell,p}(\alpha)$ directly from the characteristic equation \eqref{eq:char-unified} using Brent's root-finding algorithm, and compare with the asymptotic bounds \eqref{eq:bounds}.\smallskip

Figure~\ref{fig:spectral-disk} shows the spectral branches $k_{\ell,p}(\alpha)$ as functions of the impedance $\alpha$ for representative modes $(\ell,p)$ in the unit disk ($d=2$).  In each panel, the solid blue curve is the numerically computed Robin eigenvalue, the green dash-dot line marks the Neumann limit $\mu_{\ell,p}'/R$ (as $\alpha\to0^{+}$), and the red dashed line marks the Dirichlet limit $\mu_{\ell,p}/R$ (as $\alpha\to+\infty$).  For every mode, the branch increases strictly with $\alpha$, interpolating continuously between the two extremes without crossing. The same monotonic interpolation behaviour is observed for the unit ball and all higher dimensions $d\geq3$ (not shown).

\begin{figure}[htbp]
\centering
\includegraphics[width=0.9\textwidth]{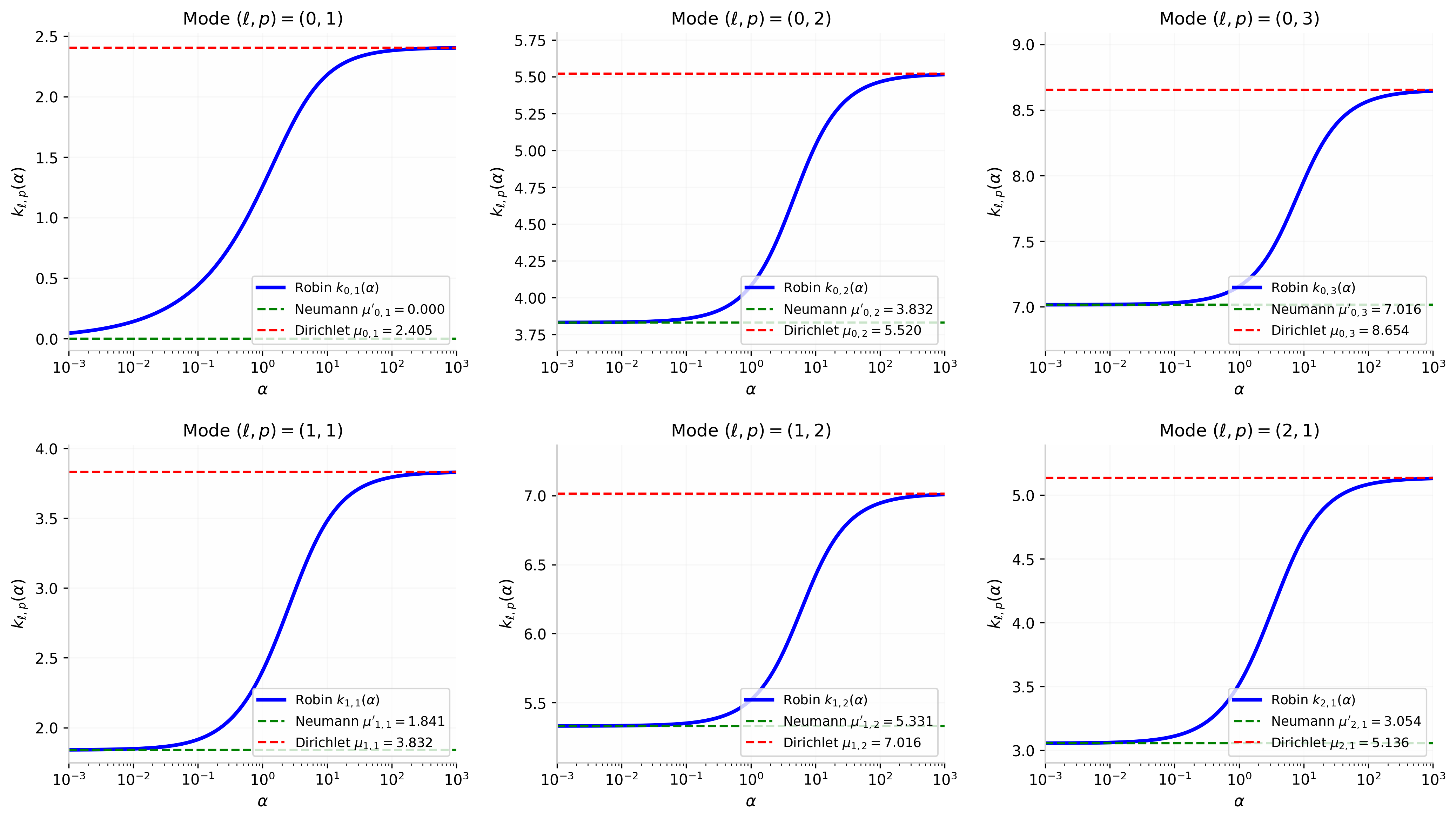}
\caption{Spectral branches $k_{\ell,p}(\alpha)$ for the unit disk ($d=2$).  Solid curves: numerically computed roots of $\Delta_{\ell}(k)=0$.  Dashed lines: Dirichlet eigenvalues $\mu_{\ell,p}$.  Dash-dot lines: Neumann eigenvalues $\mu_{\ell,p}'$.}
\label{fig:spectral-disk}
\end{figure}

\subsection{A global estimate of the critical wave number}
\label{sec:kc-estimate}

We conclude with a closed-form estimate of the critical wave number
$k_c$ for arbitrary $\alpha R$, the smallest positive root of
\begin{equation}\label{eq:char-l0}
x\,J_{\nu+1}(x)=\beta\, J_\nu(x),
\qquad \nu=\tfrac{d-2}{2},\ \ x=k_cR,\ \ \beta=\alpha R.
\end{equation}
The perturbation series of \eqref{eq:char-l0}
in Theorem~\ref{thm:gap}(iii)
$$x_c^2=d\beta-\frac{d}{d+2}\beta^2+O(\beta^3)$$
is accurate only for small $\beta$
(already $10\%$ off at $\beta=1$ for its leading term).  Interpolating
between the first three terms of this series and the Dirichlet limit
$x_c\to \mu_{0,1}$ ($\beta\to\infty$, where $\mu_{0,1}$ is the first
positive zero of $J_\nu$) gives the two-point rational approximation
\begin{equation}\label{eq:kc-global}
x_c^2\;\approx\;
\frac{d\beta\,(1+r\beta)}
{1+s\beta+\dfrac{dr}{\mu_{0,1}^{\,2}}\,\beta^{2}},
\qquad
r=\frac{d\,\mu_{0,1}^{\,2}}{(d+2)(d+4)\,
\bigl(d(d+2)-\mu_{0,1}^{\,2}\bigr)},
\quad s=r+\frac{1}{d+2},
\end{equation}
whose coefficients are determined exactly, without any numerical
fitting.  Table~\ref{tab:kc-global} shows that its relative error stays below $1.2\%$ on the whole range $\beta\in(0,\infty)$ for $d=3$
(below $0.7\%$ for $d=2$ and below $1.7\%$ for $d=4$), while the leading term $\sqrt{d\beta}$ of the series fails already for $\beta>1$.
\vspace{-0.3cm}

\begin{table}[htbp]
\centering
\caption{Approximation \eqref{eq:kc-global} of $x_c=k_cR$ for $d=3$,
against the exact root of \eqref{eq:char-l0}.}
\label{tab:kc-global}
\vspace{0.3cm}
\begin{tabular}{c c c c c}
\hline
$\beta$ & exact $x_c$ & approximation \eqref{eq:kc-global} & rel.\ error
& leading-term error\\
\hline
$1$   & $1.5708$ & $1.5716$ & $0.05\%$ & $10.3\%$\\
$3$   & $2.2889$ & $2.2987$ & $0.43\%$ & $31.1\%$\\
$5$   & $2.5704$ & $2.5907$ & $0.79\%$ & $50.7\%$\\
$10$  & $2.8363$ & $2.8681$ & $1.12\%$ & $93.1\%$\\
$100$ & $3.1102$ & $3.1210$ & $0.35\%$ & $456.9\%$\\
\hline
\end{tabular}
\end{table}

\subsection{High-frequency spacing}
\label{subsec:spacing-num}

We validate Theorem~\ref{thm:spacing} by computing the exact roots
$k_{\ell,p}(\alpha)$ of the characteristic
equation~\eqref{eq:char-unified} for large $p$ and comparing the
consecutive spacing with the asymptotic
formula~\eqref{eq:spacing}.  Figure~\ref{fig:spacing} shows the
results for the unit disk ($d=2$, $R=1$, $\ell=0$, $\alpha=1$),
where $C_{0,2}(1)=5/8$.

The left panel confirms that the actual spacing (blue dots) rapidly
approaches the Weyl leading term $\pi/R$ (grey dotted line) and is
accurately captured by the second-order correction in~\eqref{eq:spacing}
(red dashed curve).  The right panel shows that the relative error
decays as $O((k_{\ell,p}R)^{-3})$: it drops from
$1.7\times10^{-2}$ at $p=2$ to $6.4\times10^{-8}$ at $p=100$,
matching the black $O(p^{-3})$ reference line.  Even for moderately
large $p$ the second-order correction is sufficient to bring the
spacing to within $10^{-4}$ of the exact value, while the
leading-order term $\pi/R$ alone deviates by roughly $0.3\%$ at
$p=5$ and falls below $0.1\%$ at $p=10$.

The explicit correction
$C_{\ell,d}(\alpha)=\alpha R+\ell-\frac{(2\ell+d)^2-1}{8}$
reveals how the Robin condition interpolates between the Neumann
and Dirichlet extremes.  At $\alpha=0$ the correction reduces to
$C_{\ell,d}(0)=\ell-\frac{(2\ell+d)^2-1}{8}$, which governs the
high-frequency spacing of the Neumann problem; as $\alpha$
increases, the linear term $\alpha R$ shifts the spacing
monotonically toward the Dirichlet regime. Only a single linear
term in $\alpha$ controls the second-order spacing correction.
The explicit form of the characteristic equation makes this
quantitative dependence transparent, offering a concrete
benchmark against which more general asymptotic and variational
results can be tested and calibrated.

\begin{figure}[htbp]
\centering
\includegraphics[width=0.95\textwidth]{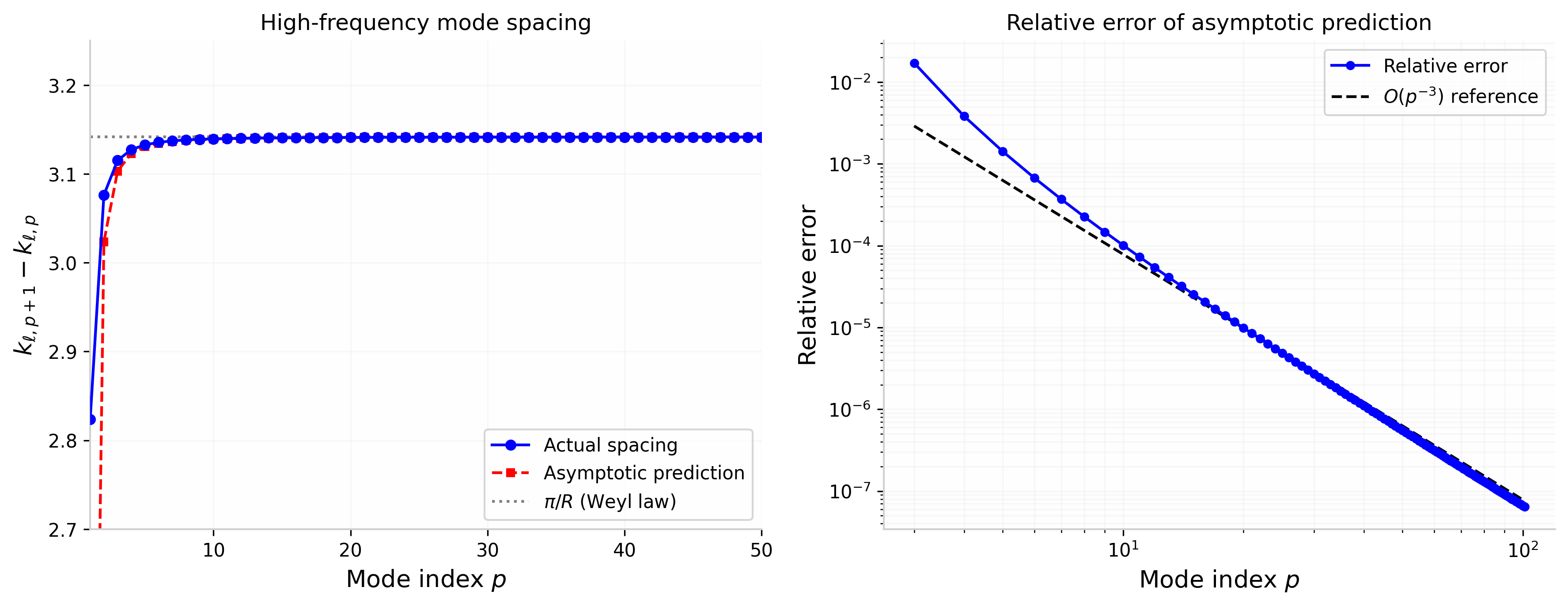}
\caption{High-frequency mode spacing for the unit disk ($d=2$,
$R=1$, $\ell=0$, $\alpha=1$).  Left: consecutive spacing
$k_{0,p+1}-k_{0,p}$ versus mode index $p$.  Right: relative error
of the asymptotic formula~\eqref{eq:spacing}.}
\label{fig:spacing}
\end{figure}

\section{Conclusion}\label{sec:conclusion}

We have obtained the first explicit closed-form Green's functions for the Helmholtz equation with Robin boundary conditions on balls in all dimensions $d\ge 2$. The construction unifies the two-dimensional and higher-dimensional settings through a single algebraic framework: the free-space fundamental solution is expanded on the boundary via Graf's or the hyperspherical addition theorem, and the regular correction is determined mode by mode by matching Robin traces. The resulting series are absolutely convergent on compact interior subsets, symmetric in source and observer, and computable to machine precision with only $N\gtrsim kR$ modes.

These kernels furnish exact forward solutions for impedance-matched cavities, eliminating geometric discretisation error entirely. They serve as non-approximate benchmarks for the validation of finite-element and boundary-element codes on Robin boundaries, and their explicit resonance spectra provide test data for spectral asymptotics and Weyl-law calibration. Because the characteristic determinant is available in closed form, every spectral property---monotonic interpolation, low-frequency gap, and high-frequency spacing correction---follows from elementary Bessel analysis without recourse to abstract variational or Dirichlet-to-Neumann arguments.

The decomposition--expansion method adapts readily to annular and spherical-shell domains, where it leads to small $2\times 2$ linear systems for the modal coefficients. Time-domain analogues via Laplace and Fourier transforms will be treated elsewhere.

\section*{Statements and Declarations}

The author declares that there is no conflict of interest. Data sharing is not applicable to this article as no datasets were generated or analysed during the current study.

\end{document}